\documentclass[sn-mathphys-num]{sn-jnl}


\usepackage{filecontents}

\usepackage{graphicx}%
\usepackage{multirow}%
\usepackage{amsmath,amssymb,amsfonts}%
\usepackage{amsthm}%
\usepackage{mathrsfs}%
\usepackage[title]{appendix}%
\usepackage{xcolor}%
\usepackage{textcomp}%
\usepackage{manyfoot}%
\usepackage{booktabs}%
\usepackage{algorithm}%
\usepackage{algorithmicx}%
\usepackage{algpseudocode}%
\usepackage{listings}%

\usepackage{amsmath}
\usepackage{amsfonts}
\usepackage{amsthm}
\usepackage{eurosym}
\usepackage{graphicx}
\usepackage{amssymb}
\usepackage{epstopdf}
\usepackage{topcapt}
\usepackage{latexsym}
\usepackage{mathtools}
\usepackage{graphics}
\usepackage{subfigure}
\usepackage{dsfont}
\usepackage[english]{babel}
\usepackage{color}

\usepackage{float}


\theoremstyle{thmstyleone}%
\newtheorem{theorem}{Theorem}
\newtheorem{proposition}[theorem]{Proposition}%

\theoremstyle{thmstyletwo}%
\newtheorem{remark}{Remark}%

\theoremstyle{thmstylethree}%

\raggedbottom

\begin{document}

\title[A stochastic population model with hierarchic size-structure]{A stochastic population model with hierarchic size-structure}


\author[1]{\fnm{Carles} \sur{Barril}}


\author[1,2]{\fnm{\`{A}ngel} \sur{Calsina}}


\author[1]{\fnm{J\'{o}zsef Z. } \sur{Farkas}}


\affil[1]{\orgdiv{Department de Matem\`{a}tiques}, \orgname{Universitat Aut\`{o}noma de Barcelona}, \orgaddress{\street{} \city{Bellaterra}, \postcode{08193},  \country{Spain}}}

\affil[2]{\orgdiv{Centre de Recerca Matem\`{a}tica},  \orgaddress{ \city{Bellaterra}, \postcode{08193},  \country{Spain}}}


\abstract{We consider a hierarchically structured population in which the amount of resources an individual has access to is affected by individuals that are larger,  and that the intake of resources by an individual only affects directly the growth rate of the individual.   We formulate a deterministic model,  which takes the form of a delay equation for the population birth rate.  We also formulate an in\-di\-vi\-dual based stochastic model,   and study the relationship between the two models. In particular the stationary birth rate of the deterministic model is compared to that of the quasi-stationary birth rate of the stochastic model.  
Since the quasi-stationary birth rate cannot be obtained explicitly,  we derive a formula to approximate it.   We show that the stationary birth rate of the deterministic model can be obtained as the large population limit of the quasi-stationary birth rate of the stochastic model.  This relation suggests that the deterministic model is a good approximation of the stochastic model when the number of individuals is sufficiently large.
}

\keywords{Hierarchically size-structured population,  individual based stochastic model,  quasi-stationary distribution.}


\pacs[MSC Classification]{60K40, 45G10, 92D25}

\maketitle

\section{Introduction}\label{sec1}

Many deterministic population models rely implicitly on the assumption that the number of individuals is large enough so that stochastic fluctuations can be neglected. In order for a model of this kind to be mechanistically justified, it should describe  the behaviour of an individual based stochastic model at the so-called large population limit.  To be more specific  let us examine the following example.  Consider the ``logistic'' birth and death process taking place in an area of size $A$ (we assume that individuals are well mixed in that area), which is given by the transition rates
\[
\tilde{\lambda}_k = \lambda\left(\frac{k}{A}\right)k\qquad\text{and}\qquad \tilde{\mu}_k=\mu\left(\frac{k}{A}\right)k,
\]
where {$k$ is the number of individuals and} $\lambda$ and $\mu$ are the fertility and the mortality rates of individuals respectively, which depend on the density of individuals in the population, {i.e. $k/A$}. Denote by $N_t^A$ the number of individuals of such a process. Consider the particular case in which $\lambda(n)\equiv \lambda_0$ and $\mu(n)=\mu_0+\alpha n$, with $\lambda_0>\mu_0$. From the classical results derived in \cite{kurtz1970} it follows that, for a given time window $[0,\hat{t}]$, the trajectories of $n_t^A:=N_t^A/A$ with $P(N_0^A=\lfloor n_0 A\rfloor ) = 1$ are, for large enough $A$, arbitrarily close to the orbit $n(t)$ of the initial value problem
\begin{equation}\label{IVPlogistic}
\left\{\begin{array}{l}
n'(t)=\lambda_0 n(t) - (\mu_0+\alpha n(t))n(t)\\
n(0)=n_0
\end{array}
\right. ,
\end{equation}
where $n(t)$ refers to the population density at time $t$ (its units are individuals per area, the same ones of the rescaled process $n_t^A$). More precisely, for all $\varepsilon>0$
\begin{equation}\label{DetLimitScalar}
\lim_{A\rightarrow\infty} P\left(\left|N_t^A/A-n(t)\right|>\varepsilon \;|\; N_0^A=\lfloor n_0 A\rfloor\right) = 0,\qquad\forall t\in [0,\hat{t}].
\end{equation}
This important relation implies that the differential equation in \eqref{IVPlogistic}, which is nothing but the logistic equation $n'(t)=r(1-n(t)/\kappa)n(t)$ with $r=\lambda_0-\mu_0$ and $\kappa=r/\alpha$, is a mechanistically justified and valid model. As a consequence it is a model useful to predict the behaviour of the underlying stochastic model at the large population limit. It is worth noticing, however, that for any fixed $A$, no matter how large it is, there is an important disagreement between $n(t)$ and $n_t^A$ for large $t$, due to the fact that $0$ is an absorbing state of the stochastic system.  In particular,  it can be shown that for all $\varepsilon>0$ the limit of $P(n_t^A>\varepsilon)$ as $t$ tends to infinity is 0, whereas $n(t)$ tends to $\kappa$ as $t$ tends to infinity. This discrepancy is inherently associated to the nature of population models,  since no individuals can be born from an extinct population.

As shown in \cite{kurtz1970}, the theory to derive ordinary differential equations as limits of scaled Markov processes can be applied not only to competition systems of one species as the example above, but essentially to any Markov process where the states are discrete and refer to the number of individuals  of each species in the system. Such a relation between ODEs and Markov processes, however, does not help to study the stochastic dynamics when the area $A$ is not large enough. The question whether $A$ is large enough, or more precisely the study, as a function of $A$, of the time-scale during which the rescaled stochastic process is close to the deterministic ones,  is addressed in different studies see,  for instance, \cite{chazottes2016, naasell2002, andersson2000}.  When the process has $0$ as the only absorbing state, two useful concepts to tackle this question are the probability of extinction of the process (as a function of time) and the quasi-stationary distribution of the process,  defined as the asymptotic distribution of the process conditioned to non-extinction.

If the Markov process takes values in an infinite dimensional space, some results in the spirit of \eqref{DetLimitScalar} can be found. Infinite dimensional Markov processes are important since they arise naturally when dealing for example with continuously structured population models. In these cases the state variable is a function of the structuring variable instead of a finite dimensional vector (see \cite{diek1998li,  diek2001nonlin} for a deterministic formulation of these models).  In \cite{metz2013daphnias, carmona2018some} physiologically structured population models close to the one analysed in the present paper are considered. There individuals are assumed to change their physiological state (such as size or age) deterministically between jump events, where jumps occur according to some probability measures and describe stochastic events such as births, deaths or transitions between different individual classes. Large population limit results are given, which relate the stochastic system to a deterministic PDE.  These works extend previous studies in which large population limits were established for stochastic age-structured populations, such as \cite{oelschlager1990limit,tran2008large}. Similar techniques are used in \cite{bansaye2015} to stablish analogous results for a different kind of stochastic structured population models. There, the population is structured by a genetically determined phenotype, which is assumed to be constant during whole life (new phenotypes arise due to mutation events during reproduction). See also \cite{guerrero2015} for an application in which deterministic limits of stochastic systems have been applied to link the dynamics of a cellular population with a submodel describing cell metabolism.


Individual based stochastic size-structured populations have been analysed through approaches not directly related to generalisations of \cite{kurtz1970}, such as in \cite{allen2009}. There a deterministic size structured population model is expanded into a stochastic PDE which takes into account random fluctuations in the number of births, deaths and the way individuals grow. The methodology is based on the theory developed in \cite{allen2008}, which starts with the construction of a discrete (in time and size) stochastic model. These results could be potentially helpful to address some of the questions we pose throughout the paper. However, the {connection} is not completely clear since stochastic PDEs and stochastic models defined at the individual level are different in general. 

In this article we present an individual based stochastic version of the deterministic hierarchically size-structured population model we recently studied in \cite{BCDF2023}. The hierarchical structure is present due to the assumption that the amount of resources an individual has access to is affected (only) by those individuals that are larger.  As far as we know this type of size-structured contest competition has not been modelled and analysed previously using stochastic birth and death events.
In contrast, the deterministic,  partial differential equation formulation of hierararchical age and size-structured populations is well studied.   We mention the possibly earliest such work \cite{cushing1994}, where a hierarchical age-structured contest competition model was introduced and studied.  In \cite{2006basic} existence of solutions of a very general class of hierarchical size-structured population models was established.  The more recent paper \cite{Farkas2010} focused on the qualitative analysis of hierarchical size-structured models via spectral methods.  Most notably,  the emergence of measure valued solutions for quasilinear hierarchical size-structured population models was demonstrated in \cite{AD,AI} using the traditional PDE formulation.

Here we formulate the deterministic model studied in \cite{BCDF2023} is as a renewal equation for the population birth rate per unit of area. The biological assumptions of the model are:
\begin{itemize}
\item individuals are born with the same minimal size $x_m$,
\item the growth rate of an individual is positive and depends on the amount of larger individuals, i.e. the growth rate of an individual with size $x$ at time $t$ is given by
\begin{equation}\label{growthSimple}
g\left(\int_x^\infty u(t,y)dy\right),
\end{equation}
where $g$ is a positive function and $u(t,x)$ is the population density with respect to size at time $t$ and per unit of area (so that $\int_x^\infty u(t,y)dy$ is the number of individuals with size larger than $x$ per unit of area, i.e. the spatial density of individuals that are larger than $x$), 
\item the fertility rate of an individual of size $x$ is given by $\beta(x)$,
\item the mortality rate of all individuals is equal to $\mu$ (a positive constant).
\end{itemize}
Since all individuals are born with the same size, the positiveness of the growth rate implies that an individual is larger than some other individual if and only if it is older than that other individual. Therefore the growth rate of an individual with age $a$ at time $t$ can be given as
\[
g\left(\int_a^\infty b(t-\alpha)e^{-\mu \alpha}d\alpha\right)
\] 
where $b(t)$ is the population birth rate at time $t$ per unit of area, and hence $b(t-\alpha)e^{-\mu \alpha}$ gives the density of individuals with age $\alpha$ at time $t$ per unit of area (the individuals born at time $t-\alpha$ that have survived $\alpha$ units of time, i.e. until time $t$). Then the renewal equation for the population birth rate reads
\begin{equation}  \label{scalar2}
b(t)= \int_0^{\infty} \beta \left( x_m + \int_0^a g \left(
\int_{a}^{\infty} e^{-\mu(\tau-a+s)} b_t(-s) \mbox{d}s
 \right) \, \mbox{d}\tau \right)\,\, e^{-\mu a}b_t(-a) \,
\mbox{d}a,
\end{equation}
where
\begin{equation}
b_t(\theta):=b(t+\theta).
\end{equation}
Notice that \eqref{scalar2} can be understood directly from the description of the physical situation: it states that the birth rate at time $t$ is given by the \textit{addition} for the age $a$ of the mothers, with density equal to the birth rate at time $t-a$ times their survival probability, their size specific fertility with size given by the birth size plus the integral with respect to $\tau$ of the individual growth rate at age $\tau$, which depends on how many individuals were larger (equivalently older) than them at that age:  this is the integral of the density of individuals of age $s>a$, i.e. born at time $t-s<t-a$, that survived until time $t-a+\tau$, which is precisely the time when the mother born at time $t-a$ has age $\tau$.  Notice that from $t-s$ to $t-a+\tau$ there are $\tau-a+s$ units of time, in accordance to the survival factor inside the integral.

In \cite{BCDF2023} it is shown that if $g$ is decreasing and $\beta$ is increasing, then (\ref{scalar2}) can have at most one non-trivial stationary birth rate (i.e. a constant value $\bar{b}>0$ satisfying (\ref{scalar2})). If $g(z)\rightarrow 0$ as $z\rightarrow \infty$, such a non-trivial equilibrium exists if and only if the basic reproduction number $R_0$ (i.e. the expected number of offspring of an individual in the extinction environment) is larger than one. In terms of the model ingredients such a condition takes the form:
\begin{equation}\label{R0eq}
R_0:=\int_0^\infty \beta(x_m + g(0)a)e^{-\mu a} da > 1.
\end{equation}
The stochastic version we introduce and study differs from the deterministic model in that birth and mortality ``deterministic'' rates are replaced by its corresponding probability rates for an individual to be born or to die.

The goal of this paper is to analyse the asymptotic distribution of the stochastic model and study whether the deterministic model of \cite{BCDF2023} is a good approximation of the stochastic model presented here. To this end we compare the (unique) positive stationary birth rate of the deterministic model with the (unique) quasi-stationary birth rate of the stochastic model, and we show that the former can be recovered as a certain limit of the latter.

\begin{figure}
\includegraphics[width=\textwidth]{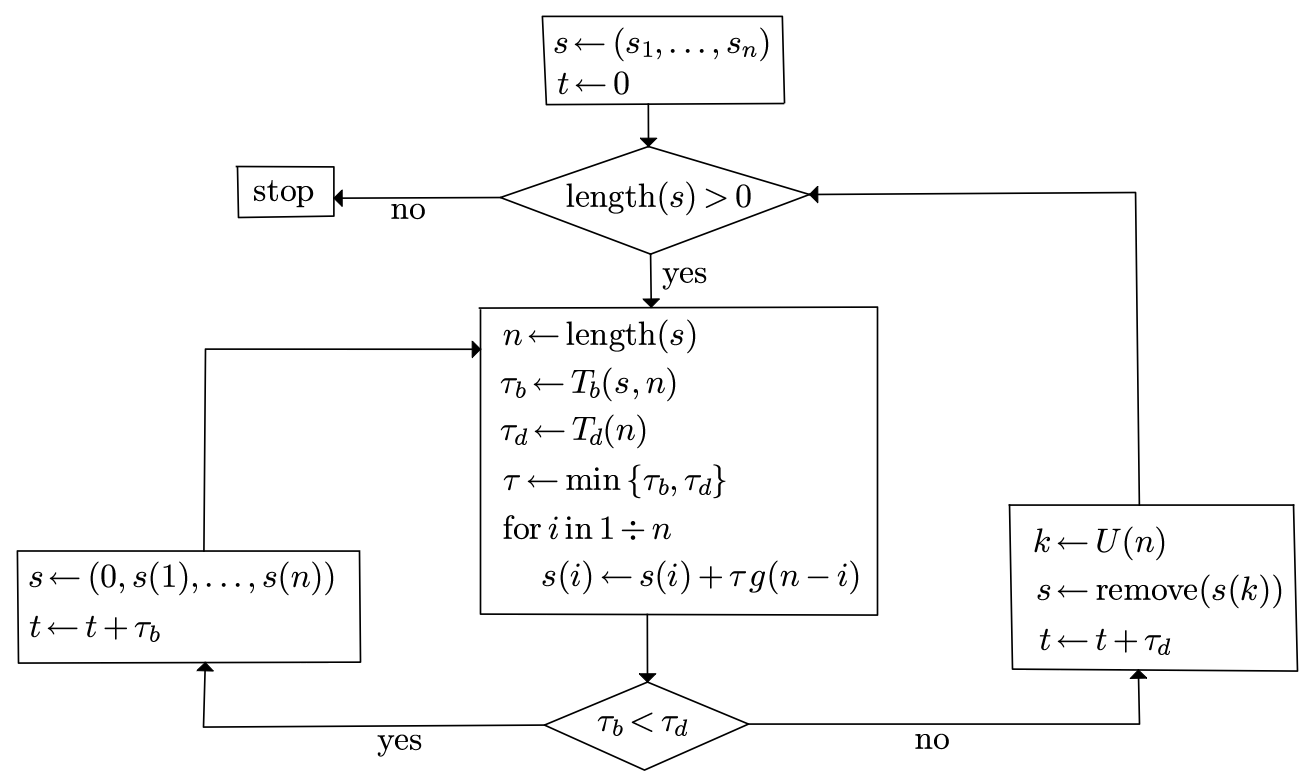}
\caption{\label{programDiagram} Flow diagram of the program that computes realisations of the stochastic process described in Section \ref{sthocSection}. The state of the system is represented by a vector of sizes. Sizes are ordered from small to large (because older individuals are always larger than younger ones). The function $T_d(n)$ gives a random number exponentially distributed with rate $n\mu$. It simulates the time until the next death.  The function $T_b(s,n)$ is a random number distributed according to the density $f_{T_b}(\tau)=B(\tau) \exp\left(-\int_0^\tau B(\theta)d\theta\right) \mathds{1}_+(\tau)$ with $B(\tau)=\sum_{i=1}^n \beta(s(i)+\tau g(n-i))$. It simulates the time until the next birth.  The function $U(n)$ gives a random index from $\{1,2,..,n\}$ uniformly distributed, and $\text{remove}(s(k))$ returns vector $s$ with the $k${-th} component removed, {and} this process simulates the individual who dies.} 
\end{figure}

\section{Stochasticity and population size}\label{sthocSection}
Most biological populations have a discrete number of agents and are inherently stochastic. Hence, continuous and deterministic equations modelling them (such as equation (\ref{scalar2})) must be understood as certain approximation of the stochastic, individual based process that gives an accurate description of the population dynamics.

In this section we compare the renewal equation (\ref{scalar2}) with the stochastic system that it supposedly represents, which models the dynamics of a population with a finite number of individuals that are well mixed in an environment of area $A$. 
Here we assume that individuals live in a two-dimensional environment (as do terrestrial animals) to ease the explanation, but we could alternatively use volumes instead of areas if the population inhabited a three-dimensional environment. In any case, the role played by the parameter $A$ is related to the force of competition: the more space individuals have, the less competition and hence the larger the population can grow. The details of the process are the following:

\begin{itemize}
\item[$\bullet$] The state of the system is a vector $(t_1,t_2,\dots,t_M)$ representing the times of each birth event that has taken place in the past, whose components satisfy $t_1<t_2<\cdots<t_M$ so that the $k$-th component of the vector is the time of the $k$-th most ancient birth event. Its dimension (the number of accumulated births denoted by $M$) increases in time due to the new birth events.

\item[$\bullet$]The mortality rate of individuals is denoted by  $\mu$, which means that the probability that an individual alive at time $t$ dies in the time window $[t,t+dt]$ divided by $dt$ tends to $\mu$ as $dt$ tends to 0. This is equivalent to say that the life span of each individual is independent of the other individuals and follows an exponential random variable with parameter $\mu$. In particular, the individual born at time $t_k$ dies at time $\bar{t}_k=t_k+X_k$ with $X_k\sim \text{Exp}(\mu)$.  The amount of individuals that are alive at time $t$ is thus
\[
N(t)=\sum_{k=1}^{M(t)}\mathds{1}_+(\bar{t}_k-t),
\]
where the function $\mathds{1}_+$ denotes the Heaviside theta function.

\item[$\bullet$] The birth rate is given by
\[
B(t)=\sum_{i=1}^{N(t)} \beta(s_i(t)),
\]
where $s_i(t)$ is the size of the $i$-th smallest individual at time $t$. The above expression implies that the probability that a birth occurs in a time window $[t,t+dt]$ divided by $dt$ tends to $B(t)$ as $dt$ tends to 0.

\item[$\bullet$] All individuals are born with size $x_m$  (the minimal size)  and the growth rate of the $i$-th smallest individual is determined by 
\begin{equation}\label{growthRateSto}
s_i'(t)=
{
g\left(\frac{1}{A}\sum_{j=i+1}^{N(t)} 1  \right)
= \,}
g\left(\frac{N(t)-i}{A} \right) ,
\end{equation}
so that the growth of an individual depends on the number of larger individuals (recall that the positivity of $g$ and the fact that everybody is born with the same size implies that being larger is equivalent to being older, so that the number of larger individuals  reads as well as the number of older individuals). Since the function $g$ is only evaluated at numbers of the form $j/A$ with $j$ a natural number, we define $g_j:=g(j/A)$ for all $j\in\mathbb{N}$. {Notice also that the argument of $g$ is a density of individuals (recall that $A$ is the area inhabited by the individuals), so that the individuals growth rate in the determinstic model is equivalent to the individuals growth rate in the stochastic one.}     
\end{itemize}

The functional parameters $\beta$ and $g$, and the parameter $\mu$ coincide with the ones used in equation (\ref{scalar2}), since their biological meaning is independent on whether the model is deterministic or stochastic. Notice that $B(t)$ (as well as $M(t)$, $N(t)$ and $s_i(t)$ for $i\in\{1,\dots,M(t)\}$) depend on the parameter $A$. When needed, this dependence is emphasized by writing $B(t;A)$. Similarly we write $B(t;v,A)$ with $v=(t_1,t_2,\dots,t_M)$ to emphasize the dependence of the birth rate on the vector of initial birth events.

To simplify the notation, from now on we take,  without loss of generality,   $x_m=0$. {This is not a restriction because the model can be formulated in terms of the auxiliary structuring variable $y=x-x_m$ (if $x$ denotes size, $y$ would denote the difference between size and the size at birth). However, let us note that for some biological systems, $x_m=0$ is a good approximation. Indeed, for a model describing a height-structured tree population with $x$ referring to the height of trees, then it makes sense to consider $x_m=0$}. In Figure \ref{programDiagram} a flow diagram of the program used to simulate this process is drawn (where we work with the vector of sizes instead of the vector of birth events for computational reasons).

As commented in the introduction, a key question about the relation between the solution of \eqref{scalar2} and the scaled process $B(t;A)/A$ is whether the former can be obtained as the (deterministic) limit of the latter as $A$ tends to infinity and the initial vector of birth events is scaled properly with $A$, by choosing, for instance, $M(0;A):=\lfloor A \int_{-\infty}^0 b_0(\theta)d\theta \rfloor$ realisations, stored in a vector $\theta(A)=(\theta_1,\dots,\theta_{M(0;A)})$ in increasing order, of a random variable with a density function proportional to $b_0\in L^1(-\infty,0)$ (i.e. the initial history of birth rates per unit of area used in the renewal equation \eqref{scalar2}). That is, to show in analogy to (\ref{DetLimitScalar}) that for all $\varepsilon>0$ one has 
\[
\lim_{A\rightarrow\infty} P\left(\left|\frac{B(t;\theta(A),A)}{A}-b(t)\right|>\varepsilon\right) = 0,\qquad\forall t\in [0,\hat{t}],
\]
where $\hat{t}$ is a fixed time horizon (which can be arbitrarily large) and $B(t;\theta(A),A)$ is the birth rate process obtained when the vector $\theta(A)$ of initial birth events is constructed randomly as just explained. The results given in \cite{metz2013daphnias, carmona2018some} seem promising to tackle this question. However, they consider the population density as the state variable instead of the birth rate history, so that applying them would not be straightforward in our setting. 

In this work we don't provide a proof of the above relation, but we give a result pointing towards this conclusion based on the ``equivalent'' asymptotic behaviour of the two systems in the large population limit. We take for guaranteed that, if $g$ is decreasing and $\beta(x)=\beta_0 x$, the {expected} birth rate conditioned to non-extinction, defined {as the conditional expectation}
\[
\tilde{B}(t;v,A):=\mathds{E}(B(t;v,A)\,|\,B(t;v,A)>0),
\]
converges towards a quasi-stationary birth rate $\bar{B}_*(A)$, i.e.
\[
\lim_{t\rightarrow\infty} \tilde{B}(t;v,A) = \bar{B}_*(A),
\]
for all (non-empty) vector $v$ of initial birth events. This is certainly a non-trivial result whose validity needs to be addressed rigorously. In Section \ref{exUniqQSBR} we derive a heuristic approximation of $\bar{B}_*(A)$, denoted by $\bar{B}(A)$. The accuracy of the approximation increases with $A$, and we show numerically (in Section \ref{numericalSection})  that $\bar{B}(A)$ is already a good approximation of $\bar{B}_*(A)$ for small values of $A$.

In Section \ref{formalComparison} we prove that if $R_0>1$ (see (\ref{R0eq})), so that a non-trivial stationary birth rate per unit of area of the renewal equation (\ref{scalar2}) does exist, denoted by $\bar{b}$, then
\begin{equation}\label{StoToDetBRlimit}
\lim_{A\rightarrow \infty} \frac{\bar{B}(A)}{A} = \bar{b}.
\end{equation}

\subsection{Existence and characterisation of stationary birth rates}\label{exUniqQSBR}

Here we compute some expected properties the population has at a given time, let us say at $t=0$, under the assumption that the birth events before that time followed a Poisson process with parameter $\bar{B}(A)$ (i.e. we assume that in the past the birth events were determined by a potential quasi-stationary birth rate whose inter event times are described by independent random variables exponentially distributed with parameter $\bar{B}(A)$). These quantities, expressed in terms of $\bar{B}(A)$, allow us to write a consistency equation $\bar{B}(A)$ must satisfy in order to be quasi-stationary. We deduce, therefore, that the solutions of this equation with unknown $\bar{B}(A)$ give the quasi-stationary birth rate of the stochastic system.

\begin{remark}\label{remarkBAprox}
We assume that the birth events prior to $t=0$ follow a Poisson process of parameter $\bar{B}(A)$ because this allows us to obtain a closed formula for an approximation of the quasi-stationary birth rate defined above as $\bar{B}_*(A)$. The birth events of the real system once it reaches a quasi-stationary regime, however, deviate from this assumption: lapses of inter birth events are not homogeneously exponentially distributed and there are correlations between nearby inter birth events lapses (because the time between two consecutive birth events depends on the actual population, and the population at a given time is highly correlated with the population at nearby times). The value $\bar{B}(A)$ we derive thanks to that assumption is, therefore, an approximation of the real quasi-stationary birth rate. It seems reasonable to conjecture, however, that
\begin{equation}\label{equivBaprox}
\lim_{A\rightarrow \infty} \frac{\bar{B}_*(A)}{\bar{B}(A)}=1,
\end{equation}
i.e. that the larger $A$ the closer $\bar{B}(A)$ is to $\bar{B}_*(A)$ in relative terms (since both of them diverge as $A$ tends to infinity). In particular, if this relation holds true, then proving \eqref{StoToDetBRlimit} for the approximation would imply the equivalent result for the real quasi-stationary birth rate. In this work we draw conclusions by assuming \eqref{equivBaprox} without providing a proof of it. 
\end{remark}
We start computing the distribution of individuals at $t=0$. Specifically, let $\bar{N}(A)$ give the number of individuals when the birth events in the past are given by the random sequence
\[
(...,t_{-3},t_{-2},t_{-1}),
\]
where $t_{-1}=X_1$ and $t_{-k-1}=t_{-k}-X_{k+1}$ with $\{X_i\}_{i\in\mathbb{N}}$ mutually independent random variables satisfying $X_i\sim \text{Exp}(\bar{B}(A))$ for $i\in \mathbb{N}$.

Since the birth rate is assumed to be $\bar{B}(A)$ and the mortality rate of individuals is constant and equal to $\mu$, the total population has evolved in the past according to a birth and death process with $(\lambda_k,\mu_k)=(\bar{B}(A),k\mu)$ for all $k\in \mathbb{N}$. Therefore, the number of individuals at time 0 is distributed according to the stationary distribution of this birth and death process, i.e. the random variable $\bar{N}(A)$ has density
\begin{equation}\label{pkDef}
p_k:=P(\bar{N}(A)=k)=\left(\frac{\bar{B}(A)}{\mu}\right)^k \frac{1}{k!}e^{-\bar{B}(A)/\mu},\qquad k\in\mathbb{N}\cup\{0\}.
\end{equation}
\begin{remark}
Notice that, since $P(\bar{N}(A)=0)>0$, the probabilities \eqref{pkDef} cannot represent the distribution of individuals at the quasi-stationary distribution associated to the process detailed at the beginning of Section \ref{sthocSection}.  (i.e. the distribution of individuals to which the system converge conditioned to non-extinction of the population). We conjecture, however, that $\bar{N}(A)$ is close to such distribution and that the larger $A$ is the closer it is (which makes sense since the larger $A$, the larger $\bar{B}(A)$ as we are about to see, and hence the smaller the probability that $\bar{N}(A)$ takes the value 0).
\end{remark}
Now let us focus on the expected size an individual has at a given age. This size depends on the number of other individuals present in the population when it is born. For instance, if an individual is {introduced into an empty habitat (corresponding to the extinction steady state)},  then the individual will grow at a constant speed $g_0$. Specifically, its size will be given by (recall that $x_m=0$)
\[
s_0(\tau)=g_0 \tau,
\]
with $\tau$ being the age of the individual. If, instead, there is already one individual in the population when the new individual is born, then the size of this new individual at age $\tau$ depends on the time the older individual dies. It is no longer a certain value but a random variable. The expected size at age $\tau$ of an individual born in a population with one other individual is
\[
s_1(\tau)=g_1 \left( E(Y_1^1 | Y_1^1<\tau)P(Y_1^1 <\tau)+\tau P(Y_1^1>\tau) \right)+g_0 E(\tau-Y_1^1 | Y_1^1 <\tau) P (Y_1^1 <\tau),
\]
where $Y_1^1$ is a random variable that gives the time at which the older individual dies. Notice that $g_1$ is multiplied by the expected duration of the lapse within $[0,\tau]$ during which the older population has $1$ individual and $g_0$ is multiplied by the expected duration of the lapse within $[0,\tau]$ during which the older population has zero individuals. This observation allows us to generalise the above expression to give the expected size at age $\tau, s_k(\tau),$ of an individual born when there are already $k$ other individuals in the population. To do so we first define the random variables:
\begin{equation}\label{YjkDef}
Y_j^k := \text{time at which the }j\text{ death occurs in a population of }k\text{ individuals }.
\end{equation}
For example, $Y_1^1$ is an exponential random variable with rate $\mu$. The dependencies and distributions of $Y_j^k$ in terms of $j$ and $k$ can be found in Appendix \ref{apendixSizeAtAge}, but here notice that  $Y_{j}^k \leq Y_{j+1}^k$ by definition. In order to deal with more compact formulas, the particular cases in which $k=0$, $j=0$ and $j>k$ are defined directly as $Y_0^0\equiv \infty$, $Y_0^k\equiv 0$ if $k>0$ and  $Y_j^k\equiv \infty$ if $j>k$. Then, let $l_{k-j}^k(\tau)$ be the expected duration of the lapse within $[0,\tau]$ during which the older population (with $k$ individuals at time $0$) has $k-j$ individuals (i.e. the $j$ death has occurred but not the $j+1$). This value is given in terms of $Y_{j+1}^k$ and $Y_{j}^k$ as
\begin{equation}\label{lapse}
\begin{aligned}
l_{k-j}^k(\tau)= & E(Y_{j+1}^k-Y_{j}^k | Y_{j+1}^k < \tau)P(Y_{j+1}^k < \tau) \\  
  & +E(\tau-Y_{j}^k | Y_{j}^k <\tau, Y_{j+1}^k >\tau) P(Y_{j}^k <\tau, Y_{j+1}^k >\tau),
\end{aligned}
\end{equation}
so that the function $s_k(\tau)$ is given as
\begin{equation}\label{sAgek}
s_k(\tau)=\sum_{j=0}^k g_{k-j} l_{k-j}^k(\tau) .
\end{equation}

By combining the set of $p_k$ and the set of $s_k$, the expected size of an individual at age $a$ is
\begin{equation}\label{sumSAgek}
\bar{s}(a)=\sum_{k=0}^\infty p_k s_k(a),
\end{equation}
and it can be shown (see Appendix \ref{apendixSizeAtAge}) that the above equation reduces to
\begin{equation}\label{eqSizeAtAge1}
\bar{s}(a)=\frac{1}{\mu}
\sum_{m=0}^\infty g_m \frac{1}{m!}
	\left(
	\Gamma\left(m,\frac{\bar{B}}{\mu}e^{-\mu a}\right)
	-
	\Gamma\left(m,\frac{\bar{B}}{\mu}\right)
	\right)
\end{equation}
where $\Gamma$ is the upper incomplete gamma function and where we omitted the dependence of $\bar{B}$ on $A$. Notice that the analogue of function $\bar{s}$ in the deterministic model is a function, denoted here as $\bar{s}_\text{det}$, that gives the size of individuals in terms of their age once the system has reached the stationary birth rate $\bar{b}$ of (\ref{scalar2}). A formula for $\bar{s}_\text{det}$ is (see (4.2) in \cite{BCDF2023})
\begin{equation}\label{rational_size1}
\bar{s}_\text{det}(a;\bar{b})=\int_0^a g\left(\frac{\bar{b}}{\mu}e^{-\mu\tau}\right)d\tau .
\end{equation}
Since $\bar{s}$ and $p_k$ depend on the birth rate $\bar{B}$, from now on we write $s(\tau;\bar{B})$ and $p_k(\bar{B})$. The idea now is to impose an equation for the birth rate $\tilde{B}$. Such equation is analogous to equation (4.1) in \cite{BCDF2023}, namely
\[
1 =  \int_0^\infty \beta(\bar{s}_\text{det} (a;\bar{b})) e^{-\mu a}da,
\]
which, if $\beta(x)=\beta_0 x$,  reads as
\begin{equation}\label{rationalbirthrate_ImplicitForm}
1 = \beta_0 \int_0^\infty \bar{s}_\text{det} (a;\bar{b}) e^{-\mu a}da.
\end{equation}
Indeed, this equation is obtained by imposing that the expected offspring of an individual is equal to 1 (so that the population stays constant from a statistical point of view). The same can be accomplished in the stochastic framework by noticing that the expected offspring of an individual born in a population with $k$ older individuals is
\[
\int_0^\infty \beta(s_k(a))e^{-\mu a} da ,
\]
so that, taking into account all the possible scenarios in which an individual can be born, the mentioned equation reads
\begin{equation}\label{BEqS	thGeneral}
1=\sum_{k=0}^\infty p_k(\bar{B})\int_0^\infty \beta(s_k(a))e^{-\mu a} da ,
\end{equation}
which becomes
\begin{equation}\label{BEqSth1}
1=\beta_0 \int_0^\infty \bar{s}(a;\bar{B})e^{-\mu a} da, 
\end{equation}
in the particular case that $\beta$ has the form $\beta(x)=\beta_0 x$. {Choosing the relation $\beta(x)=\beta_0 x$ between fertility and size is a notable restriction on the generality of the model. It would certainly be better to prove the following results by assuming only that $\beta(x)$ is increasing. It turns out, however, that for the particular relation $\beta(x)=\beta_0 x$ many computations become explicit and results can be obtained more easily. Nevertheless, the rationale behind such a particular case could help to deal with the general case.}

{From \eqref{BEqS	thGeneral} it then follows} that there are as many quasi-stationary birth rates as positive solutions equation \eqref{BEqS	thGeneral} has for the unkown $\bar{B}$. As shown next, if $\beta(s)=\beta_0 s$ and $g$ reflects a scenario of pure competition (no Allee effects), then \eqref{BEqSth1} has at most one positive solution, and such a positive solution exists if and only if $R_0>1$ (see \eqref{R0eq}). Notice that under these assumptions 
$$R_0=\displaystyle\int_0^\infty \beta_0 g(0)a e^{-\mu a}da=\beta_0 g(0)/\mu^2.$$
\begin{theorem}\label{BbarFunctionThm}
Assume that $\beta(s)=\beta_0 s$ and $\{g_m\}_{m\in\mathbb{N}}$ is decreasing and $g_m$ tends to 0 as $m\rightarrow \infty$. If $R_0=\beta_0 g_0/\mu^2>1$, then \eqref{BEqSth1} has one and only one solution $\bar{B}>0$. If $R_0\leq 1$, then \eqref{BEqSth1} has no positive solutions.  
\end{theorem}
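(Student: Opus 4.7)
The plan is to reduce equation \eqref{BEqSth1} to finding positive zeros of the function
\[
F(\bar B):=\beta_0\int_0^\infty \bar s(a;\bar B)\,e^{-\mu a}\,da \;=\; \sum_{k=0}^\infty p_k(\bar B)\,\phi_k,\qquad \phi_k:=\beta_0\int_0^\infty s_k(a)\,e^{-\mu a}\,da,
\]
where the second identity comes from \eqref{sumSAgek} and Tonelli. The quantity $\phi_k$ is, biologically, the expected number of offspring produced by an individual born into a population with exactly $k$ older competitors. The strategy is to establish four properties of $F$: continuity on $[0,\infty)$, the boundary values $F(0)=R_0$ and $\lim_{\bar B\to\infty}F(\bar B)=0$, and strict monotone decrease on $[0,\infty)$. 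Together with the intermediate value theorem these give exactly one positive solution when $R_0>1$ and none when $R_0\leq 1$.

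The bulk of the work is the analysis of the sequence $(\phi_k)$. I would first rewrite $s_k(a)=\int_0^a E[g(M_k(t)/A)]\,dt$, where $M_k(t)$ counts the survivors at time $t$ among the $k$ older competitors present at the focal birth. Since these competitors die independently at rate $\mu$, I realise $M_k(t)=\sum_{i=1}^k \mathbf{1}\{\tau_i>t\}$ on a single probability space with i.i.d.\ $\tau_i\sim$ exp$(\mu)$; adding one extra lifetime $\tau_{k+1}$ yields the pathwise domination $M_{k+1}(t)\geq M_k(t)$, and because $g$ is decreasing this gives $s_{k+1}(a)\leq s_k(a)$ and hence $\phi_{k+1}\leq\phi_k$. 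The identity $s_0(a)=g_0 a$ then delivers $\phi_0=\beta_0 g_0/\mu^2=R_0$. For the tail, the strong law of large numbers $M_k(t)/k\to e^{-\mu t}$ combined with the hypothesis $g_m\to 0$ forces $g(M_k(t)/A)\to 0$ in probability for every $t>0$, and two dominated convergence arguments (with uniform bound $g_0$) give $\phi_k\to 0$.

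With $(\phi_k)$ in hand, the four properties of $F$ become routine. Continuity and the value $F(0)=R_0$ both follow from dominated convergence applied to $\sum_k p_k(\bar B)\phi_k$, using that the Poisson weights $p_k(\bar B)$ are continuous in $\bar B$, that $p_0(\bar B)\to 1$ as $\bar B\to 0^+$, and the uniform bound $\phi_k\leq R_0$. The limit at infinity follows by the standard splitting $F(\bar B)\leq R_0\sum_{k<K}p_k(\bar B)+\varepsilon$, picking $K$ so that $\phi_k<\varepsilon$ for $k\geq K$ and noting that the Poisson mass on any finite set vanishes as its parameter $\bar B/\mu$ diverges. Strict monotonicity exploits that the Poisson family is strictly stochastically increasing in its parameter: for $\bar B_1<\bar B_2$ I can couple $\bar N_{\bar B_1}\leq \bar N_{\bar B_2}$ with a positive probability of strict inequality, and since the hypothesis $g_m\to 0$ ensures that $(\phi_k)$ is non-increasing but not eventually equal to $\phi_0$, this strict domination transfers to $F(\bar B_2)<F(\bar B_1)$.

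The step I expect to require the most care is the proof that $\phi_k\to 0$: it hides a double interchange of limits and integrations and relies sensitively on the assumption that $g$ actually decays to $0$ at infinity rather than merely being bounded. A close second is the strict monotonicity of $F$, because only non-strict monotonicity of $(\phi_k)$ is available from the coupling, and the strict decrease of $F$ must be extracted from the combined input of strict Poisson dominance and the tail behaviour of $(\phi_k)$.
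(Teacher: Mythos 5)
Your argument is correct, but it takes a genuinely different route from the paper. The paper works entirely from the closed-form expression \eqref{eqSizeAtAge1}: it shows (see \eqref{RBAform}) that the left-hand side $R(B,A)$ is the running average $\frac{\mu}{B}\int_0^{B/\mu}f(x,A)\,dx$ of the function $f(x,A)=\frac{\beta_0}{\mu^2}\sum_m \frac{g_m(A)}{m!}x^m e^{-x}$, and then reads off everything from $f$: the limit $R_0$ at $0$ is $f(0,A)$, the limit $0$ at infinity follows from $f(x,A)\to 0$, and strict monotonicity follows because $\partial_x f<0$ (a one-line computation using that $g_m$ decreases), an average of a strictly decreasing function over $[0,B/\mu]$ being strictly decreasing in $B$. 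You instead slice the same double integral the other way, as $\sum_k p_k(\bar{B})\phi_k$ with $\phi_k$ the expected offspring given $k$ initial competitors, and replace the calculus with probabilistic structure: monotone coupling of survivor counts for $\phi_{k+1}\le\phi_k$, the law of large numbers for $\phi_k\to 0$, and strict Poisson stochastic dominance for strict decrease of $F$. (Note that $f(x,A)$ is, up to the factor $\beta_0/\mu^2$, exactly $\mathds{E}[g_{P}]$ for $P$ Poisson with mean $x$, so the paper's ``$f$ is strictly decreasing'' is the analytic shadow of your dominance argument.) What the paper's route buys is brevity and an explicit representation of $R(B,A)$ that is reused verbatim in the proof of Theorem \ref{ThmAsyLimit}; what yours buys is independence from the gamma-function identity of Appendix \ref{apendixSizeAtAge} and a clearer path to relaxing $\beta(x)=\beta_0 x$ to a general increasing $\beta$ with $\beta(0)=0$, since the coupling gives $s_{k+1}(a)\le s_k(a)$ pointwise and hence $\phi_{k+1}\le\phi_k$ for any increasing $\beta$. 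The one step you should write out fully is the strict decrease of $F$: non-strict monotonicity of $(\phi_k)$ plus a coupling with $P(N_1<N_2)>0$ is not by itself enough, and you need the coupling to charge a pair $(j,k)$ at which $\phi$ actually drops; the cleanest fix is the Abel-summation identity $\mathds{E}[\phi_N]=\phi_0+\sum_{k\ge 0}(\phi_{k+1}-\phi_k)P(N>k)$ combined with $P(N_2>k)>P(N_1>k)$ for every $k$ and the existence (from $\phi_0=R_0>0$, $\phi_k\to 0$) of some $k$ with $\phi_{k+1}<\phi_k$.
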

\begin{proof}
Define, for $B>0$ and $A>0$,
\[
R(B,A):=\beta_0 \int_0^\infty \bar{s}\left(a;B,A\right)e^{-\mu a} da
\]
so that \eqref{BEqSth1} reads $R(\bar{B},A)=1$ for a fixed $A$. Recall that $g_m$ was defined as $g(m/A)$. In the following we write $g_m(A)$ to stress the dependency of $g_m$ on $A$. Notice that, using \eqref{eqSizeAtAge1},
\begin{equation}\label{RBAform}
\begin{aligned}
R(B,A)& =\frac{\beta_0}{\mu}\sum_{m=0}^\infty \frac{g_m(A)}{m!} \int_0^\infty \int_{\frac{B}{\mu}e^{-\mu a}}^\frac{B}{\mu} x^{m-1}e^{-x}dx e^{-\mu a}da \\
& = \frac{\beta_0}{\mu}\sum_{m=0}^\infty \frac{g_m(A)}{m!} \int_0^{\frac{B}{\mu}}\int_{-\frac{1}{\mu}\ln\frac{\mu}{B}x}^{\infty} e^{-\mu a}da x^{m-1}e^{-x}dx  \\
	& = \frac{\beta_0}{\mu}\sum_{m=0}^\infty \frac{g_m(A)}{m!} \int_0^{\frac{B}{\mu}}\frac{1}{B}x^m e^{-x}dx = \frac{1}{\frac{B}{\mu}} \int_0^{\frac{B}{\mu}}f(x,A) dx,
	\end{aligned}
	\end{equation}
	with
	\[
	f(x,A):=\frac{\beta_0}{\mu^2}\sum_{m=0}^\infty \frac{g_m(A)}{m!}x^m e^{-x}.
	\]
	From \eqref{RBAform}
	 it follows $\lim_{B \rightarrow 0}R(B,A)=f(0,A)=R_0$ and $\lim_{B\rightarrow\infty} R(B,A)=0$. To see the second claim use the fact that $f(x,A)$ tends to 0 when $x$ goes to infinity: given $\varepsilon>0,$ one can choose $M$ such that $g_m < \frac{\varepsilon}{2}$ for $m > M$ and bound  
	 $$
	 0 < \frac{\mu^2}{\beta_0}f(x,A) \leq g_0 e^{-x} \sum_{m=0}^{M} \frac{x^m}{m!} + g_M e^{-x} \sum_{m=M+1}^\infty,  \frac{x^m}{m!} < \varepsilon
	 $$
	 for $x$ large enough since the second addend is less than $\frac{\varepsilon}{2}$ for any $x$.
	 \\
	 To conclude the proof we notice that $R(B,A)$ is strictly decreasing with respect to its first variable since by  \eqref{RBAform} 
	 it is the average of $f(\cdot,A)$ on the interval $[0, \frac{B}{\mu}]$ and $f(\cdot,A)$ is a strictly decreasing function. Indeed, we can compute
	 $$
	 \frac{\partial}{\partial x} f(x,A) = \frac{\beta_0}{\mu^2} e^{-x} \sum_{m=0}^\infty \frac{g_{m+1}(A)-g_{m}(A)}{m!} x^m < 0,
	 $$
	since $g_m$ is decreasing with respect to $m$.   
\end{proof}

Since the conditions in the previous statement do not depend on the value of $A$, the existence of a quasi-stationary birth is independent of $A$, and in particular a function $\bar{B}(A)$ of positive quasi-stationary birth rates does exist provided that $R_0>1$. Notice also that with $\bar{B}$ known, one can give (under the assumption that the birth events in the past are determined by $\bar{B}$) the expected total population, that is $\bar{B}/\mu$, as well as the size distribution of the individuals:
\begin{equation}\label{sizeDistSth}
\bar{U}(x)=\frac{\bar{B}}{\mu} \nu(\bar{s}^{-1}(x;\bar{B}))\frac{d}{d x}\bar{s}^{-1}(x;\bar{B}),
\end{equation}
where $\nu$ is the distribution of the individuals with respect to their age, which in this case is $\nu(a)=\mu e^{-\mu a}$. This is the analogue version (after normalising by $A$) of the size distribution associated to the steady birth rate $\bar{b}$ of the deterministic renewal equation, given by {(with $\bar{s}_\text{det}$ defined in \eqref{rational_size1})} 
\begin{equation}\label{sizeDistDet}
\bar{u}(x)=\frac{\bar{b}}{\mu}\nu(\bar{s}_\text{det}^{-1}(x;\bar{b}))\frac{d}{d x}\bar{s}_\text{det}^{-1}(x;\bar{b})
\end{equation} 
{(see (5.18) in \cite{BCDF2023} for its deduction)}.

\section{Formal comparison between the deterministic and the stochastic models}\label{formalComparison}

In this section we prove \eqref{StoToDetBRlimit}, i.e. that the stationary birth rate $\bar{b}$ of the deterministic model can be obtained as the limit as the area parameter $A$ tends to infinity of the quasi-stationary birth rate $\bar{B}(A)$ of the stochastic model after scaling it by $A$ (see Remark \ref{remarkBAprox}). To do so, we must assume $R_0> 1$ so that $\bar{b}$ and $\bar{B}(A)$ do exist and are uniquely defined, as proven in \cite{BCDF2023} for $\bar{b}$ and in Theorem \ref{BbarFunctionThm} for $\bar{B}(A)$. {Notice that the equation for the stationary birth rate $\bar{b}$ of the deterministic renewal equation \eqref{scalar2} is obtained by imposing that the birth history is constant, which gives (under the assumption $\beta(x)=\beta_0 x$ and $x_m=0$)
\begin{equation}\label{bbarDet}
1 = \int_0^{\infty} \beta_0 \int_0^a g\left(\frac{\bar{b}}{\mu}e^{-\mu\tau}\right) \, \mbox{d}\tau \,\, e^{-\mu a} \,
\mbox{d}a = \beta_0 \int_0^{\infty} \bar{s}_\text{det}(a;\bar{b}) \,\, e^{-\mu a} \,
\mbox{d}a,
\end{equation}
so that $\bar{b}$ is defined implicitly as the unique solution of that equation.
} 

\begin{theorem}\label{ThmAsyLimit} Let $\beta(x)=\beta_0 x$ and $g$ be decreasing and such that $g(z)\rightarrow 0$ as $z\rightarrow 0$. Let $R_0>1$, with $R_0$ defined in \eqref{R0eq} (given in the statement of Theorem \ref{BbarFunctionThm}). Let $\bar{b}$ be the stationary birth rate of \eqref{scalar2} and let $\bar{B}(A)$ be the solution of \eqref{BEqSth1}. Then \eqref{StoToDetBRlimit} holds, i.e. 
\[
\bar{b}=\lim_{A\rightarrow \infty} \frac{\bar{B}(A)}{A}.
\]
\end{theorem}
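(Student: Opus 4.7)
The plan is to rewrite the stochastic equation $R(\bar{B}(A),A)=1$ in terms of the scaled unknown $b_A := \bar{B}(A)/A$, take $A\to\infty$, and identify the limiting equation with the deterministic stationary equation \eqref{bbarDet}. Using the identity \eqref{RBAform} from the proof of Theorem \ref{BbarFunctionThm}, the change of variable $x = Ay$ gives
\begin{equation*}
h_A(b) := R(Ab,A) \;=\; \frac{\mu}{b}\int_0^{b/\mu} f(Ay,A)\,dy,
\end{equation*}
so that $b_A$ is characterised by $h_A(b_A) = 1$. The task reduces to showing that $h_A$ converges to a limit function $h_\infty$ whose unique positive root is $\bar b$.

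The key observation is the probabilistic reading
\begin{equation*}
f(x,A) \;=\; \frac{\beta_0}{\mu^2}\sum_{m=0}^\infty g(m/A)\frac{x^m e^{-x}}{m!} \;=\; \frac{\beta_0}{\mu^2}\,\mathbb{E}\!\left[g\!\left(Z_x/A\right)\right],\qquad Z_x \sim \mathrm{Poisson}(x).
\end{equation*}
Setting $x=Ay$ for a fixed $y>0$, the random variable $Z_{Ay}/A$ has mean $y$ and variance $y/A$, hence concentrates at $y$ as $A\to\infty$. Since a monotone function is continuous almost everywhere, one gets $f(Ay,A) \to (\beta_0/\mu^2)\,g(y)$ for a.e.\ $y>0$. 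Using the uniform bound $0\le f(\cdot,A)\le \beta_0 g(0)/\mu^2$ (from $g\le g(0)$) on the bounded interval $[0,b/\mu]$, dominated convergence yields the pointwise limit
\begin{equation*}
h_A(b) \;\longrightarrow\; h_\infty(b) \;:=\; \frac{\beta_0}{\mu b}\int_0^{b/\mu} g(y)\,dy \qquad\text{for every fixed } b>0.
\end{equation*}

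I would then identify $h_\infty(\bar{b}) = 1$ with the deterministic equation \eqref{bbarDet}: an application of Fubini followed by the substitution $y = (\bar{b}/\mu)e^{-\mu\tau}$ turns the right-hand side of \eqref{bbarDet} into exactly $\frac{\beta_0}{\mu\bar{b}}\int_0^{\bar{b}/\mu} g(y)\,dy = h_\infty(\bar b)$. Since $g$ is decreasing with $g(\infty)=0$, the function $h_\infty$ is continuous and strictly decreasing from $h_\infty(0^+) = \beta_0 g(0)/\mu^2 = R_0 > 1$ down to $0$, so $\bar{b}$ is its unique positive root.

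To upgrade the pointwise convergence $h_A\to h_\infty$ to convergence of roots $b_A\to\bar{b}$, I would exploit strict monotonicity of each $h_A$ (proved in Theorem \ref{BbarFunctionThm}). For any $\varepsilon>0$ one has $h_\infty(\bar{b}-\varepsilon)>1$ and $h_\infty(\bar{b}+\varepsilon)<1$; by pointwise convergence the same strict inequalities hold for $h_A$ for all $A$ sufficiently large, and monotonicity combined with $h_A(b_A)=1$ forces $\bar{b}-\varepsilon < b_A < \bar{b}+\varepsilon$. The main technical obstacle is the Poisson-concentration/dominated-convergence step. It is clean precisely because $g$ is bounded by $g(0)$ and continuous almost everywhere; the only delicate point is that $f(Ay,A)\to (\beta_0/\mu^2)g(y)$ may fail at the (at most countably many) discontinuity points of $g$, but these form a Lebesgue-null set and do not affect the integral.
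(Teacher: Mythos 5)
Your proposal is correct, and while the overall architecture matches the paper's (reduce to computing $\lim_{A\to\infty}R(Ab,A)$ for fixed $b$, identify the limit with $\beta_0\int_0^\infty \bar{s}_\text{det}(a;b)e^{-\mu a}\,da$ via Fubini, then convert convergence of the functions into convergence of the roots using strict monotonicity of $R(\cdot,A)$), you handle the key analytic step by a genuinely different route. The paper writes $R(Ab,A)$ as the gamma-weighted sum $\frac{1}{x}\sum_m G(m/x)\gamma(m+1,x)/m!$ and proves a dedicated lemma (Proposition \ref{propRiemanLimit}) by explicit manipulation of incomplete gamma identities, Stirling's formula, and a separate extension from integer to real $x$. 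You instead keep the averaged-integral form $\frac{\mu}{b}\int_0^{b/\mu}f(Ay,A)\,dy$ from \eqref{RBAform}, read $f(Ay,A)$ as $\frac{\beta_0}{\mu^2}\mathbb{E}[g(Z_{Ay}/A)]$ with $Z_{Ay}$ Poisson, and conclude by Chebyshev concentration of $Z_{Ay}/A$ at $y$ plus bounded convergence, using that the monotone $g$ is bounded and a.e.\ continuous. This is shorter and more conceptual, and it makes transparent why the hypotheses on $g$ (bounded, monotone, hence Riemann integrable on compacts) are exactly what is needed; the paper's computation is more self-contained but considerably longer. Two small remarks: the strict decrease of $h_\infty$ is not automatic from ``$g$ decreasing'' alone (if $g$ is constant near $0$ then $h_\infty$ is too), but the trichotomy $h_\infty(\bar b\mp\varepsilon)\gtrless 1$ you need follows from the uniqueness of $\bar b$ as a solution of \eqref{bbarDet}, which is already established in \cite{BCDF2023} --- the paper's own proof relies on the same fact; and the hypothesis in the statement should of course read $g(z)\to0$ as $z\to\infty$, which is what both arguments actually use.
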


\begin{proof}

First notice that \eqref{StoToDetBRlimit} holds if
\begin{equation}\label{limitBtilde}
\lim_{{A\rightarrow \infty}} R(bA,A) = \lim_{{A\rightarrow \infty}} \beta_0 \int_0^\infty \bar{s}\left(a;b A\right)e^{-\mu a} da=l\quad\text{where}\quad
\left\{
\begin{array}{l}
l>1 \text{ if } b<\bar{b}\\
l=1 \text{ if } b=\bar{b}\\
l<1 \text{ if } b>\bar{b}
\end{array}
\right.,
\end{equation}
and where $R(B,A)$ is defined in \eqref{RBAform}. Indeed, let us assume that there exists an  $\varepsilon > 0$ and a sequence $A_n \rightarrow \infty$  such that $\frac{\bar{B}(A_n)}{A_n} > \bar{b} + \varepsilon$, i.e. $\bar{B}(A_n) > (\bar{b} + \varepsilon)A_n.$  Since $R$ is a strictly decreasing function of its first argument, we have
$$
1 = R(\bar{B}(A_n),A_n) <  R((\bar{b} + \varepsilon)A_n,A_n) \rightarrow l < 1,
$$
a contradiction. The case $\frac{\bar{B}(A_n)}{A_n} < \bar{b} - \varepsilon$ can be dealt with similarly.

Therefore, let us prove \eqref{limitBtilde} in the following. To do so, start noticing that from \eqref{RBAform} we have
\[
R\left(bA,A\right)  = \frac{\beta_0}{bA \mu}\sum_{m=0}^\infty \frac{g_m(A)}{m!} \int_0^{\frac{bA}{\mu}}x^m e^{-x}dx = R_0 \frac{1}{\frac{bA}{\mu}}\sum_{m=0}^\infty \frac{g_m(A)}{g_0 m!} \gamma\left(m+1,\frac{bA}{\mu}\right),
\]
where $\gamma$ denotes the lower gamma function, $g_0:=g(0)$ and $R_0=\beta_0 g_0/\mu^2$ is the value of $R_0$ according to \eqref{R0eq} when $\beta(x)=\beta_0 x$.

Then, according to Proposition \ref{propRiemanLimit} in Appendix \ref{AppendixAuxProp} (with $G(x)=g(b\mu^{-1} x)/g_0$, and recall that $g_m(A)=g(m/A)$), we have that
\begin{equation}\label{RiemLimit}
\begin{aligned}
l & = \lim_{A\rightarrow \infty} R\left(bA,A\right)=R_0 \lim_{A\rightarrow \infty} \frac{1}{\frac{b}{\mu} A} \sum_{m=0}^\infty \frac{g_m(A)}{g_0 m!} \gamma\left(m+1,\frac{b}{\mu} A\right) \\
& = R_0 \lim_{A\rightarrow \infty} \frac{1}{\frac{b}{\mu} A} \sum_{m=0}^\infty G\left(\frac{m}{\frac{b}{\mu}A}\right)\frac{1}{m!} \gamma\left(m+1,\frac{b}{\mu} A\right) = R_0 \frac{1}{\frac{b}{\mu}}\int_0^{\frac{b}{\mu}} \frac{g(x)}{g_0}dx.
\end{aligned}
\end{equation}

In particular notice that $l$, as a function of $b$, is decreasing because $g$ is decreasing. To complete the proof it is then enough to show that $l=1$ if $b=\bar{b}$. To see this notice that from (\ref{rational_size1}) we have (using $\beta(x)=\beta_0 x$)
\begin{equation}\label{sbarForm}
\begin{aligned}
&\beta_0 \int_0^\infty \bar{s}_\text{det}(a;b)e^{-\mu a}da =\beta_0 \int_0^\infty \int_0^a g\left( \frac{b e^{-\mu \tau}}{\mu} \right)d\tau e^{-\mu a}da \\ &\qquad = \beta_0 \int_0^\infty \int_\tau^\infty e^{-\mu a}da\; g\left( \frac{b e^{-\mu \tau}}{\mu}\right)d\tau = \frac{\beta_0}{\mu^2} \int_0^{\frac{b}{\mu}} g(x)\frac{\mu}{b} dx = R_0\frac{1}{\frac{b}{\mu}}\int_0^{\frac{b}{\mu}} \frac{g(x)}{g_0} dx,
\end{aligned}
\end{equation}
so that from \eqref{RiemLimit} and \eqref{sbarForm} it follows that
\begin{equation}\label{leqRdet}
l=\beta_0 \int_0^\infty \bar{s}_\text{det}(a;b)e^{-\mu a}da,
\end{equation}
and clearly $l=1$ if $b=\bar{b}$ since $\bar{b}$ satisfies
\eqref{bbarDet}.

\end{proof}

\section{Numerical comparison between the deterministic and stochastic models}\label{numericalSection}

In Figure \ref{figTeoVsSim} we compare the deterministic and the stochastic versions of the hierarchic size-structured model when the birth rate is $\beta(s)=\beta_0 s$ and the growth rate has the form
\[
g(z)=\frac{g_0}{1+\frac{z}{z_0}}.
\]
Each row is associated to different $z_0$ values: $z_0=5$ in A, $z_0=1$ in B and $z_0=0.2$ in C. The area parameter used in the stochastic model is $A=1$ (so that $\bar{B}/A=\bar{B}$ and $\bar{B}$ compares directly to $\bar{b}$ as explained in Section \ref{sthocSection}). In the first column we show the expected size of the individuals according to their age. The grey points correspond to individual pairs (age, size) {found in the population throughout a trajectory of the stochastic process simulated according to Figure \ref{programDiagram} (the same trajectory is used in the first 3 columns of each row)}. The purple and orange lines correspond to the theoretical curves according to the deterministic and the stochastic versions respectively
(equations {\eqref{rational_size1}} and \eqref{eqSizeAtAge1}).
As expected the orange line is in agreement with the simulation. It is surprising, however, that for not too small values of $z_0$, the results derived from the deterministic versions are also in accordance with the simulation {(Figure \ref{figTeoVsSim}, first row)}. The second column shows the normalised histogram of the individual sizes of a numerical simulation once the trajectory ``has reached'' the quasi-invariant distribution. As in the first column we observe that the shape of the size distribution in A and B is well predicted by both the analytical results derived from the stochastic version, i.e. equation \eqref{sizeDistSth}, and the analytical results derived from the deterministic version, i.e. {\eqref{sizeDistDet}}.
Notice that whereas we would expect the simulations to agree with \eqref{sizeDistSth}, the close agreement between the simulation and {\eqref{sizeDistDet}} is relatively unexpected since, a priori, there is no formal link between the simulations and the deterministic model when $A$ is fixed. In the light of Theorem \ref{ThmAsyLimit}, however, we can say that $A\geq 1$ is large enough for the sets of parameters used in A and B. The 4th column shows how $\bar{B}$ {and $\bar{b}$} vary as a function of $R_0$ {(using equations \eqref{BEqSth1} and \eqref{bbarDet} respectively)}. The results suggest that the quasi-stationary birth rate is only well predicted by the deterministic version if $z_0$ is large enough compared to the area parameter $A$. In general it seems that the deterministic versions underestimate the quasi-stationary birth rate of the stochastic version. In the 3rd column a simulated trajectory of the stochastic version is shown (where the total population is plotted). In Figure \ref{HeuVsSim} we check the relation between the approximated quasi-stationary birth rate $\bar{B}(A)$ and empirical approximations of the real quasi-stationary birth $\bar{B}_*(A)$ (see Remark \ref{remarkBAprox} for more details).

\begin{figure}[h]
\includegraphics[width=\textwidth]{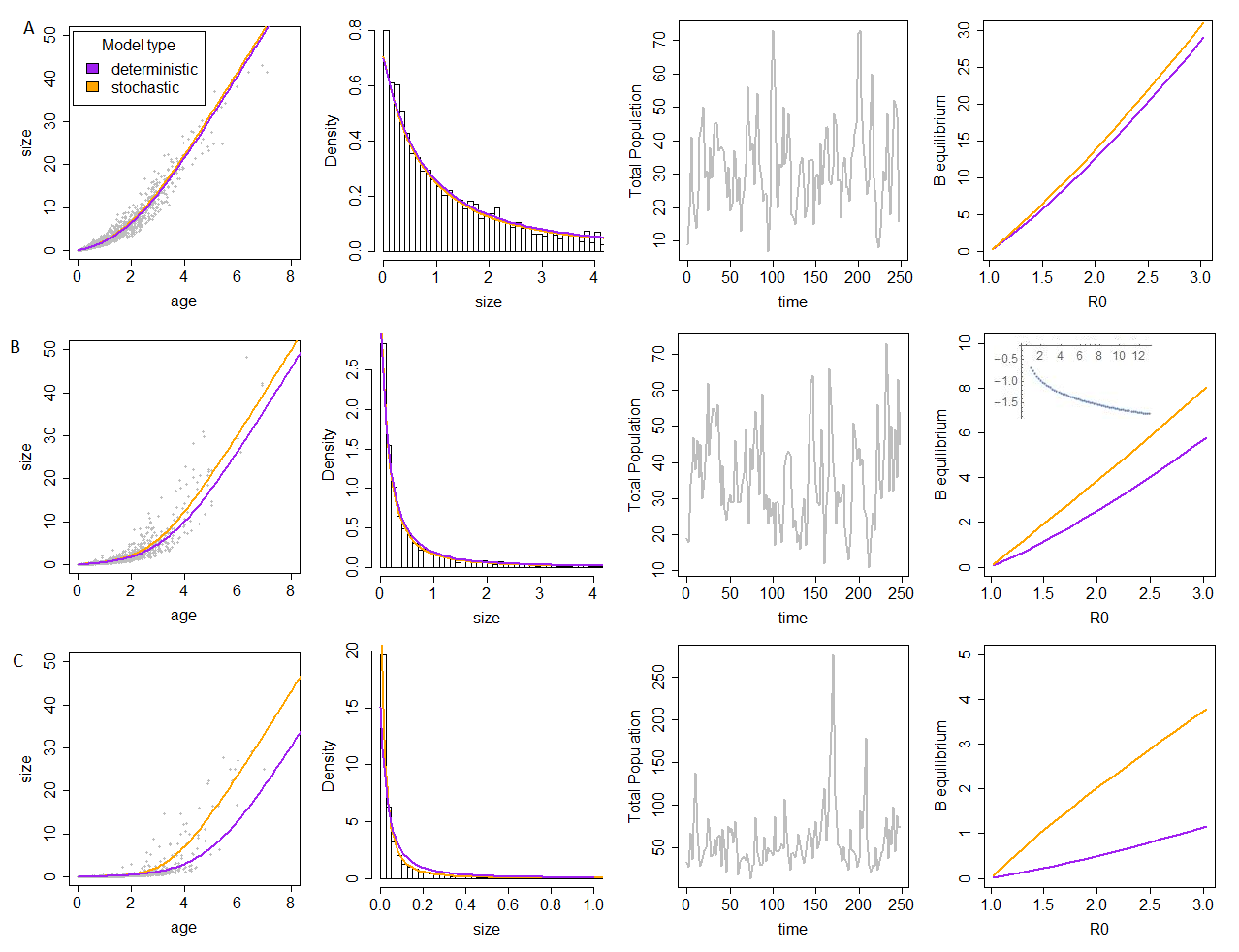}
\caption{\label{figTeoVsSim} 
Comparison between the deterministic and stochastic versions of the hierarchical size-structured model (with $A=1$) for growth rates $g(z)=g_0/(1+z/z_0)$ and birth rates $\beta(s)=\beta_0 s$. The parameters used are the following:  $\mu=1$, $g_0=10$ and $\beta_0$ is such that $\bar{b}=30$ in columns 1, 2 and 3, while in column 4 $\beta_0$ varies so that $R_0$ ranges from 1 to 3. Each row is associated to different $z_0$ values: $z_0=5$ in A, $z_0=1$ in B and $z_0=0.2$ in C. {Results from simulations of the stochastic model are coloured in gray, whereas results from the analytical formulas associated to the deterministic and the stochastic models are coloured in purple and orange respectively. The inset plot of row B and column 4 shows the ratio $(\bar{B}-\bar{b})/\bar{b}$ as a function of $R_0$ in a $\log_{10}$-$\log_{10}$ scale. For more details see Section 4.}}
\end{figure}

The deterministic model corresponds to the dynamics of the expected population distribution of the stochastic model when  individuals do not interact (notice that this assumption is equivalent to say that the functional ecological responses are linear, so that the deterministic model obtained under this assumption is necessarily linear). Therefore, to understand the reasons behind the similarity (or discrepancy) between the deterministic and stochastic models in general one can ask how far the dynamics of the two models are from being linear.

Proceeding in this way we explain why the two models are more similar for larger values of $z_0$. Indeed, it turns out that in these cases the non-linearity of the model is, in some sense, softened. This can be seen by comparing the graphs of $g(z)=g_0/(1+z/z_0)$ for different $z_0$ values. When $z_0$ is small, the growth rate of individuals is notably slower under the presence of larger individuals, which implies that the individual in the top of the hierarchy grows much faster than the other ones. Instead, as $z_0$ increases, the growth rate of an individual is less affected by the presence of larger individuals in the population (only under a huge amount of larger competitors an individual experiences a significant slow down of its growth rate, but notice that for this to happen the population must have a lot of individuals). This makes the growth rate of all individuals to be relatively independent (and thus the system is closer to be linear).

Interestingly, as it can be shown in the fourth column of Figure \ref{figTeoVsSim}, the critical value of $\beta_0$ above which the deterministic model has a non-trivial stationary distribution coincides with the threshold that determines the existence of a quasi-stationary distribution in the stochastic model.
Such a relation is also explained by noticing that the model at this regime is close to be linear. Indeed, when the parameters are such that the population birth rate ($\bar{b}$ and $\bar{B}$ respectively in the deterministic and stochastic models) tends to be close to $0$, then the population tends to be composed by a single individual whose dynamics is, obviously, independent of other individuals.

When comparing the two models one would expect the similarity between them to increase as the population size at the quasi-stationary distribution tends to infinity. When looking at the fourth column of Figure \ref{figTeoVsSim}, one observes rather the contrary. This could be due to the intrinsic low population found at the top of the hierarchy: no matter how many individuals are there in the population, only a few are among the largest. Hence, the dynamics of these individuals is inherently stochastic in the sense that it is not possible to average the effect of ``infinitely many'' other individuals when the population tends to infinity (as it can be done in well mixed population models). It is even unclear if the relative error between the two versions tends to zero at this limit. Indeed, using equations (5.16) in \cite{BCDF2023} and \eqref{tildeBeq} {(which are explicit formulas for $\bar{b}$ and $\bar{B}$ in the case $z_0=1$), in the inset plot of row B column 4} we show how the ratio $(\bar{B}-\bar{b})/\bar{b}$ depends on $R_0$ in a $\log_{10}$-$\log_{10}$ scale. Although the relative error between $\bar{b}$ and $\bar{B}$ decreases as $R_0$ increases, this decrease slows down as $R_0$ increases.

\begin{figure}[h]
\includegraphics[width=\textwidth]{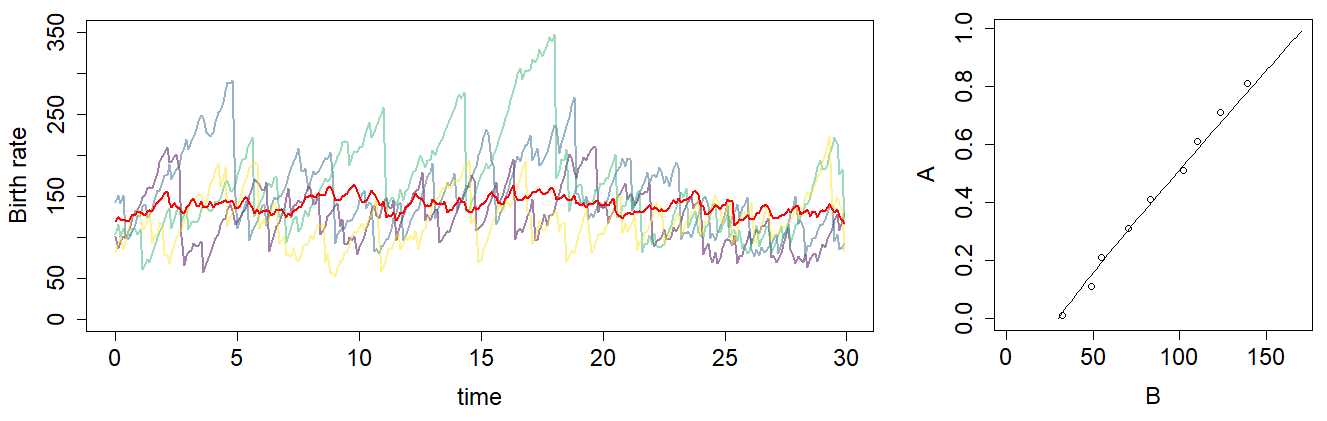}
\caption{\label{HeuVsSim} 
Comparison between the heuristic computation of the quasistationary birth rate $\bar{B}$ according to formula (\ref{BEqSth1}) and the quasistationary birth rate obtained simulating the process. In the left plot four non-extinguishing trajectories in $t\in(0,30)$ of the process are drawn for. The red curve is the average of 10 non-extinguishing trajectories in $t\in(0,30)$. Parameters are as in Figure \ref{figTeoVsSim} but with $\beta_0=3$ and $A=0.81$. The dot with coordinate $A=0.81$ in the right plot is obtained as the mean value taken by the red curve of the left plot. The other dots are obtained analogously by simulating the process with $A\in\{0.01,0.11,0.21,...,0.71\}$. The curve in the right plot is given by $R(B,A)=1$, which determines $\bar{B}$ as a function of $A$.}
\end{figure}

The specific assumption on the birth rate could be another biological phenomena underpinning this discrepancy. Since we consider that larger individuals give rise to more offspring and, as commented above, the larger the individual the less deterministic is its behaviour, the population birth rate of the stochastic model is expected to exhibit a large level of volatility. This is empirically observed in the trajectories shown in column 3 of Figure \ref{figTeoVsSim}: the volatility of total population (a readout of birth rate since the mortality rate is constant) seems to be high for low values of $z_0$ and tends to decrease as $z_0$ increases. This fact could explain why the stationary population birth rate of the deterministic model subestimates the quasi-stationary population birth rate of the stochastic model. Notice that this is a tricky question since the population birth rate is determined by two elements, namely the total population and the individual sizes, that are inversely correlated from the dynamic point of view. Indeed, if the total population is large most individual sizes are expected to be small,  whereas if the total population is small some individual sizes are expected to be large. Taking this into account, we think that the fast growth (and hence the large sizes they reach) of individuals at the top of the hierarchy in the stochastic model give rise to more offspring compared to the offspring produced by all individuals represented by the tail of the stationary distribution of the deterministic model.

\section{Conclusions}

Structured population models have attracted a vast amount of interest in the past decades.  The traditional formulation of a (deterministic) structured population model employs partial differential equations,  often equipped with non-local boundary conditions. There is also an alternative delay equation formulation of these models,  which has some theoretical advantages when studying stability via linearisation. The question whether the two formulations are equivalent in some dynamical sense has been the subject of the recent papers \cite{2016equivalence, barril2022pls}. An essential difference between them is the state variable of the system: in the pde formulation the state is the population density with respect the structuring variable whereas in the delay formulation it is the birth rate history.

Birth and death processes of structured populations can also be formulated either by considering the population distribution or measure at a given time (see e.g.  \cite{metz2013daphnias,carmona2018some}) or by considering the history of birth events, as we have done in the present work. As our results suggest, the formulation in terms of birth events histories could ease the analysis of the system when all individuals are born with the same individual state.  In this case,  quasi-stationary birth rates are just elements of $\mathbb{R}$,  whereas quasi-stationary population measures are elements of an infinite-dimensional space if the population is structured by a continuous variable (such as age or size). Indeed, it has been such a simplification what allowed us to compare the stationary birth rate of the deterministic model to the quasi-stationary birth rate of the stochastic model and show that the former can be obtained as a large population limit of the latter.  

{The reformulation of the dynamics in terms only of the birth events history is not always possible. For this to happen it is needed that the biological properties of individuals depend only on their own age and on the ages of the other individuals in the population. Such a condition holds in the presented model thanks to the biological assumptions we made. Indeed, as commented in the text, in this model the quality of being larger is equivalent to the quality of being older, so that size hierarchy (meaningful from a mechanistic point of view) reduces to age hierarchy. Then, since the growth rate only depends on the age hierarchy, the size of individuals can be given as a function of their age (and therefore the fertility of individuals, initially given as a function of their size, is effectively determined by their ages). If the model is generalised so that competition is not determined just by the amount of individuals above the hierarchy but it also takes into account the sizes of the competitors, that is if the growth rate of the $i$-th smallest individual takes the form (compare with 
\eqref{growthRateSto})
\[
g\left(\frac{1}{A}\sum_{j=i+1}^{N(t)} w(s_j) \right)
\]
for a given function $s\mapsto w(s)$, then the dynamics cannot be described only in terms of the birth events history (in other words, some other state variable is needed besides the birth events history). Another assumption that plays a role in the reformulation is the fact that all individuals are assumed to be born with the same size. If this is relaxed and individuals are allowed to be born with sizes belonging to some range, then the birth rate is no longer real valued but takes values in some functional space. Although some (determinisitic) population models in which births are distributed can be formulated as a renewal equation for function valued birth rate history \cite{2016equivalence}, it is not clear if this is possible in the system analysed here when births are not concentrated at one state. The problem is that in this case the equivalence between being larger and being older is no longer satisfied, and this prevents us to apply some of the arguments we used. Further work is needed to address this scenario for both the determinisitic and the stochastic versions of the model.
}

\bmhead{Acknowledgements}

This work was partially supported by the research projects MT2017-84214C2-2-P, PID2021-123733NB-I00 and 2021-SGR-00113.  {We thank the reviewers for careful reading of the manuscript.}

\bmhead{Conflict of interest}

The authors declare no conflict of interest.

\def\appendixname{Appendix }
\renewcommand\appendix{\par
  \setcounter{section}{0}%
  \setcounter{subsection}{0}%
  \setcounter{equation}{0}
  \gdef\thefigure{\@Alph\c@section.\arabic{figure}}%
  \gdef\thetable{\@Alph\c@section.\arabic{table}}%
  \gdef\thesection{\appendixname~\@Alph\c@section}%
  \@addtoreset{equation}{section}%
  \gdef\theequation{\@Alph\c@section.\arabic{equation}}%
  \addtocontents{toc}{\string\let\string\numberline\string\tmptocnumberline}{}{}
}

\begin{appendices}

\makeatletter
\def\@seccntformat#1{\@ifundefined{#1@cntformat}%
   {\csname the#1\endcsname\space}
   {\csname #1@cntformat\endcsname}}
\newcommand\section@cntformat{\appendixname\thesection.\space} 
\makeatother
\renewcommand{\thesection}{S\arabic{section}}

\section{Derivation of equation (\ref{eqSizeAtAge1})}\label{apendixSizeAtAge}

Using (\ref{sAgek}) in (\ref{sumSAgek}), the function $\bar{s}$ can be rewritten as
$$
\bar{s}(\tau)=\sum_{m=0}^\infty g_m \sum_{k=m}^\infty p_k l_m^k(\tau) .
$$
Therefore to show that $\bar{s}(\tau)$ satisfies (\ref{eqSizeAtAge1}) we prove that (below we use $\lambda$ instead of $\bar{B}$ to ease readability) 
\begin{equation}\label{apSumPkL}
\sum_{k=m}^\infty p_k l_m^k(\tau) = \frac{1}{\mu}\frac{1}{m!}
	\left(
	\Gamma(m,\frac{\lambda}{\mu}e^{-\mu\tau})
	-
	\Gamma(m,\frac{\lambda}{\mu})
	\right).
\end{equation}
In order to accomplish that, we first give the law of $Y_j^k$ and then, in Proposition \ref{propAplToGam} below, the stated result is proven. Recall that the law of $Y_j^k$ has to be given only when $k\geq 1$ and $j\in\{1,\dots,k\}$, since $Y_0^k\equiv 0$ for $k>0$ and $Y_0^0\equiv\infty$.

\begin{proposition} The probability density function of $Y_j^k$ defined in (\ref{YjkDef}) for $k\geq 1$ and $j\in\{1,\dots,k\}$ is given by
$$
f_{Y_j^k}(y) = \frac{1}{(j-1)!}\frac{k!}{(k-j)!}\mu e^{-(k-(j-1))\mu y}(1-e^{-\mu y})^{j-1}\mathds{1}_+(y)
$$ 
\end{proposition}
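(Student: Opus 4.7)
The plan is to recognize that $Y_j^k$, the time of the $j$-th death in a population of $k$ independently mortal individuals each with lifetime $\mathrm{Exp}(\mu)$, is by construction the $j$-th order statistic of $k$ i.i.d.\ exponential random variables. Once this identification is made, the claimed density follows immediately from the classical formula for the density of an order statistic of i.i.d.\ continuous random variables.

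More explicitly, the first step is to label the $k$ individuals alive at time $0$ by $i=1,\dots,k$ and denote their (independent) lifetimes by $X_1,\dots,X_k\sim\mathrm{Exp}(\mu)$; then, by the very definition in \eqref{YjkDef}, $Y_j^k = X_{(j)}$, the $j$-th smallest among the $X_i$. The second step is to invoke the standard order-statistics identity
\begin{equation*}
f_{X_{(j)}}(y) \;=\; \frac{k!}{(j-1)!\,(k-j)!}\,F(y)^{j-1}\bigl(1-F(y)\bigr)^{k-j} f(y),
\end{equation*}
valid for i.i.d.\ samples with common CDF $F$ and density $f$. The third step is to substitute $F(y)=1-e^{-\mu y}$ and $f(y)=\mu e^{-\mu y}$ for $y>0$ and combine the two exponential factors $e^{-(k-j)\mu y}\cdot \mu e^{-\mu y} = \mu\,e^{-(k-j+1)\mu y}=\mu\,e^{-(k-(j-1))\mu y}$, which reproduces exactly the density stated in the proposition (with the indicator $\mathds{1}_+(y)$ recording that the support is $y\ge 0$).

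As an alternative route one may argue inductively on $j$: by the memorylessness of the exponential, conditional on there being $k-j+1$ survivors just after the $(j-1)$-th death, the waiting time to the next death is $\mathrm{Exp}((k-j+1)\mu)$, independent of what came before; hence $Y_j^k$ is distributed as a sum of independent exponentials with rates $k\mu,(k-1)\mu,\dots,(k-j+1)\mu$, and a direct convolution (or Laplace-transform) computation yields the same density.

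Both approaches are essentially textbook, and I do not anticipate a genuine obstacle. The only point requiring care is the algebraic bookkeeping in the third step, namely checking that the combinatorial prefactor $k!/[(j-1)!(k-j)!]$ and the combined exponential rate $k-(j-1)$ match the form displayed in the statement.
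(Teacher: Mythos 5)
Your proof is correct, but it takes a different route from the paper's. You identify $Y_j^k$ as the $j$-th order statistic of $k$ i.i.d.\ $\mathrm{Exp}(\mu)$ lifetimes and read the density off the classical order-statistics formula; the substitution $F(y)=1-e^{-\mu y}$, $f(y)=\mu e^{-\mu y}$ does reproduce the stated expression, including the rate $k-(j-1)=k-j+1$. The paper instead proceeds by induction on $j$: it notes that $Y_1^k\sim\mathrm{Exp}(k\mu)$ as a minimum of exponentials, writes $Y_{j+1}^k=Y_j^k+X_{j+1}^k$ with $X_{j+1}^k\sim\mathrm{Exp}((k-j)\mu)$ independent of $Y_j^k$ by memorylessness, and computes the convolution — i.e.\ exactly the ``alternative route'' you sketch in your penultimate paragraph. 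Your primary argument is shorter and essentially a one-liner once the order-statistic identification is made; the paper's inductive construction has the side benefit of explicitly introducing the inter-death gaps $X_{j+1}^k$ together with their distribution and their independence from $Y_j^k$, facts that are reused later in the proof of Proposition~\ref{propAplToGam}. If you adopt the order-statistics route you would still need to supply that independence separately (again by memorylessness) for the subsequent computation, but for the proposition as stated your argument is complete.
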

\begin{proof}
First notice that $Y_1^k$ follows an exponential distribution with rate $k\mu$. Indeed, the first death occurs when one of the $k$ individuals dies. Since the survival time of each individual is exponentially distributed with parameter $\mu$, the time at which the first death occurs is exponentially distributed with rate $k\mu$ (because it is the minimum of $k$ independent random variables exponentially distributed with rate $\mu$). In particular, the statement of the theorem is true for $j=1$ since
$$
f_{Y_1^k}(y)=k\mu e^{-k\mu y}\mathds{1}_+(y).
$$
To show that the formula is also valid for $j>1$ we proceed by induction on $j$. By definition, $Y_{j+1}^k$ can be written as $Y_j^k+X_{j+1}^k$, where $X_{j+1}^k$ is the duration of the lapse between the times when the $j$ and the $j+1$ deaths occur. As argued before, since after the $j$ death there are $k-j$ individuals in the population, $X_{j+1}^k$ is exponentially distributed with rate $(k-j)\mu$. Moreover, $Y_j^k$ is independent of $X_{j+1}^k$. Therefore,
$$
f_{Y_{j+1}^k}(y)=f_{Y_j^k+X_{j+1}^k}(y)=\int_{-\infty}^\infty f_{Y_j^k}(x)f_{X_{j+1}^k}(y-x)dx .
$$
Now, using the hypothesis of induction on $f_{Y_j^k}$ and the known density of $X_{j+1}^k$, it follows
$$
f_{Y_{j+1}^k}(y)= \frac{1}{j!}\frac{k!}{(k-j-1)!}\mu e^{-(k-j)\mu y}(1-e^{-\mu y})^{j}\mathds{1}_+(y).
$$
\end{proof}

\begin{proposition}\label{propApSumpkfYk} Let $Y_j^k$ and $p_k$ defined in (\ref{YjkDef}) and (\ref{pkDef}). The following holds:
$$
\sum_{k=m+1}^\infty p_k f_{Y_{k-m}^k}(y) =\left(\frac{\lambda}{\mu}\right)^m\frac{1}{m!}e^{-(m+1)\mu y}\lambda e^{-\frac{\lambda}{\mu}e^{-\mu y}}\mathds{1}_+(y)
$$ 
\end{proposition}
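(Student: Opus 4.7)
The plan is a direct substitution-and-simplify argument, with the main step being the recognition of an exponential series after reindexing.

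First I would substitute the explicit formula for $f_{Y_{k-m}^k}(y)$ (with $j=k-m$, obtained from the previous proposition) and the Poissonian weights $p_k$ from \eqref{pkDef} into the summand, to get
\[
p_k f_{Y_{k-m}^k}(y) \;=\; \frac{1}{m!\,(k-m-1)!}\Bigl(\tfrac{\lambda}{\mu}\Bigr)^{k} \mu \,e^{-\lambda/\mu}\,e^{-(m+1)\mu y}(1-e^{-\mu y})^{k-m-1}\mathds{1}_+(y).
\]
Notice that $k!$ cancels between $p_k$ and $f_{Y_{k-m}^k}$, which is what makes the sum tractable. The factor $e^{-(m+1)\mu y}$ is independent of $k$ and pulls out of the sum, as do $\mu$, $e^{-\lambda/\mu}$ and $1/m!$.

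Next I would reindex by $n := k-m-1 \geq 0$, pulling an extra $(\lambda/\mu)^{m+1}$ out of the sum, so that the remaining series becomes
\[
\sum_{n=0}^{\infty}\frac{1}{n!}\Bigl(\tfrac{\lambda}{\mu}(1-e^{-\mu y})\Bigr)^{n} \;=\; \exp\!\Bigl(\tfrac{\lambda}{\mu}(1-e^{-\mu y})\Bigr).
\]
This is the one non-obvious (but standard) recognition in the argument and is the only "real" step: the Poisson weights are engineered precisely so that after summing the binomial-like term $(1-e^{-\mu y})^{k-m-1}$ against $1/(k-m-1)!$ one lands on the exponential series.

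Finally I would combine the pieces, using $e^{-\lambda/\mu}\cdot e^{\lambda/\mu}\cdot e^{-\lambda e^{-\mu y}/\mu}=e^{-\lambda e^{-\mu y}/\mu}$ and $(\lambda/\mu)^{m+1}\mu = (\lambda/\mu)^{m}\lambda$, yielding exactly
\[
\Bigl(\tfrac{\lambda}{\mu}\Bigr)^{m}\frac{1}{m!}e^{-(m+1)\mu y}\,\lambda\, e^{-\frac{\lambda}{\mu}e^{-\mu y}}\mathds{1}_+(y),
\]
as claimed. I do not expect a genuine obstacle here; the only care needed is bookkeeping of the factorials and the indicator $\mathds{1}_+(y)$ (which appears because each $f_{Y_{k-m}^k}$ is supported on $[0,\infty)$, hence so is the pointwise sum), plus a brief remark that the series converges absolutely uniformly in $y$ on bounded sets so interchanging sum and substitution is legitimate.
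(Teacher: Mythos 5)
Your proposal is correct and follows essentially the same route as the paper's proof: substitute the density $f_{Y_{k-m}^k}$ and the Poisson weights $p_k$, cancel $k!$, reindex to recognize the exponential series $e^{\frac{\lambda}{\mu}(1-e^{-\mu y})}$, and combine with $e^{-\lambda/\mu}$ to get $e^{-\frac{\lambda}{\mu}e^{-\mu y}}$. All the bookkeeping checks out, so there is nothing to add.
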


\begin{proof}
For $y<0$ it is clearly true and for $y\geq 0,$
\begin{small}
\[
\begin{split}
\sum_{k=m+1}^\infty p_k f_{Y_{k-m}^k}(y) & = 
\sum_{k=m+1}^\infty
\left(\frac{\lambda}{\mu}\right)^k \frac{1}{k!}e^{-\lambda/\mu}
\frac{1}{(k-m-1)!}\frac{k!}{m!}\mu e^{-(m+1)\mu y}(1-e^{-\mu y})^{k-m-1} \\
 & =
\left(
\frac{\lambda}{\mu}
\right)^{m+1}
\frac{1}{m!}
e^{-\frac{\lambda}{\mu}}
e^{-(m+1)\mu y}\mu
\sum_{k=0}^\infty \left(
\frac{\lambda}{\mu}\right)^k
\frac{1}{k!}
\left(1-e^{-\mu y}\right)^k\\
 & = 
\left(
\frac{\lambda}{\mu}
\right)^{m+1}
\frac{1}{m!}
e^{-\frac{\lambda}{\mu}}
e^{-(m+1)\mu y}\mu
e^{\frac{\lambda}{\mu}(1-e^{-\mu y})}\\
& = 
\left(\frac{\lambda}{\mu}\right)^m\frac{1}{m!}e^{-(m+1)\mu y}\lambda e^{-\frac{\lambda}{\mu}e^{-\mu y}}
\end{split}
\]
\end{small}

\end{proof}

\begin{proposition}\label{propAplToGam} Equality (\ref{apSumPkL}) holds true.
\end{proposition}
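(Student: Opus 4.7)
I would reinterpret $l_m^k(\tau)$ probabilistically as the expected time that a pure-death process $N$ starting from $k$ individuals spends at level $m$ during $[0,\tau]$, swap sum and integral, and apply a Poisson-thinning identity to collapse the remaining $k$-sum into an elementary exponential. A change of variables then turns the integral into the difference of two upper incomplete gamma functions.

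\textbf{Step 1 (integral form of $l_m^k(\tau)$).} From \eqref{lapse} together with the conventions $Y_0^k=0$ and $Y_{k+1}^k=\infty$, the population (starting from $k$) has size $m$ exactly on the interval $[Y_{k-m}^k,Y_{k-m+1}^k]$, so by Fubini
\[
l_m^k(\tau)=E\!\left[\int_0^\tau \mathds{1}_{\{N(s)=m\}}\,ds\right]=\int_0^\tau P\!\left(N(s)=m\,\big|\,N(0)=k\right)ds.
\]
Since each of the $k$ individuals survives to time $s$ independently with probability $e^{-\mu s}$, $N(s)$ is Binomial$(k,e^{-\mu s})$, giving the representation
\[
l_m^k(\tau)=\int_0^\tau \binom{k}{m}(1-e^{-\mu s})^{k-m}\,e^{-m\mu s}\,ds.
\]

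\textbf{Step 2 (sum over $k$).} Using $p_k=(\lambda/\mu)^k e^{-\lambda/\mu}/k!$ from \eqref{pkDef} (writing $\lambda:=\bar B$), interchanging sum and integral (all terms non-negative), and reindexing with $j=k-m$, the inner series collapses:
\[
\sum_{k=m}^{\infty} p_k\binom{k}{m}(1-e^{-\mu s})^{k-m}e^{-m\mu s}=\frac{(\lambda/\mu)^m}{m!}\,e^{-m\mu s}\,e^{-(\lambda/\mu)e^{-\mu s}}.
\]
This is Poisson thinning in disguise: a Poisson$(\lambda/\mu)$ population, thinned by survival probability $e^{-\mu s}$, is Poisson$((\lambda/\mu)e^{-\mu s})$ at time $s$. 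Hence
\[
\sum_{k=m}^{\infty}p_k\,l_m^k(\tau)=\frac{1}{m!}\int_0^\tau \bigl((\lambda/\mu)e^{-\mu s}\bigr)^m e^{-(\lambda/\mu)e^{-\mu s}}\,ds.
\]

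\textbf{Step 3 (change of variables).} Substitute $x=(\lambda/\mu)e^{-\mu s}$, so that $ds=-dx/(\mu x)$ and $e^{-m\mu s}=(\mu x/\lambda)^m$, with limits $x=\lambda/\mu$ at $s=0$ and $x=(\lambda/\mu)e^{-\mu\tau}$ at $s=\tau$. The integral becomes
\[
\sum_{k=m}^{\infty}p_k\,l_m^k(\tau)=\frac{1}{\mu\,m!}\int_{(\lambda/\mu)e^{-\mu\tau}}^{\lambda/\mu} x^{m-1}e^{-x}\,dx=\frac{1}{\mu\,m!}\Bigl[\Gamma\!\bigl(m,(\lambda/\mu)e^{-\mu\tau}\bigr)-\Gamma\!\bigl(m,\lambda/\mu\bigr)\Bigr],
\]
which is exactly \eqref{apSumPkL}.

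\textbf{Main obstacle.} The only step needing genuine care is Step~1: showing that the conditional-expectation formula \eqref{lapse} coincides with $\int_0^\tau P(N(s)=m\mid N(0)=k)\,ds$. This is essentially Fubini applied to $\int_0^\tau \mathds{1}_{\{Y_{k-m}^k\le s<Y_{k-m+1}^k\}}\,ds$, but one has to handle the boundary cases ($k=m$, where $Y_0^k=0$, and $m=0$) separately. Steps~2 and~3 are then routine algebra. An alternative route uses Proposition~\ref{propApSumpkfYk} (together with the index shift $m\mapsto m-1$) after writing $l_m^k(\tau)=\int_0^\tau[F_{Y_{k-m}^k}(s)-F_{Y_{k-m+1}^k}(s)]\,ds$ and swapping sum with integral; this reuses machinery already developed in the paper but creates more book-keeping of boundary terms than the Poisson-thinning argument above.
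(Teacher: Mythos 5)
Your proof is correct, and it takes a genuinely different route from the paper's. The paper computes $l_m^k(\tau)$ by conditioning on the death time $Y_{k-m}^k$ and the independent exponential gap $X_{k-m+1}^k$, obtaining $l_m^k(\tau)=\frac{1}{m\mu}\int_0^\tau(1-e^{-m\mu(\tau-y)})f_{Y_{k-m}^k}(y)\,dy$, then collapses the $k$-sum by invoking Proposition~\ref{propApSumpkfYk} and finishes with the substitution $z=\frac{\lambda}{\mu}e^{-\mu y}$; the cases $k=m$ and $m=0$ need separate handling there. You instead rewrite $l_m^k(\tau)$ as the occupation time $\int_0^\tau P(N(s)=m\mid N(0)=k)\,ds$ via Fubini, use the fact that the number of survivors at time $s$ is Binomial$(k,e^{-\mu s})$, and collapse the $k$-sum by Poisson thinning before performing essentially the same final change of variables. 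Your identification of \eqref{lapse} with the occupation time is sound (the two conditional-expectation terms are exactly $E[(\min(Y_{k-m+1}^k,\tau)-Y_{k-m}^k)^+]$, the length of $[Y_{k-m}^k,Y_{k-m+1}^k)\cap[0,\tau]$), and it automatically covers the boundary cases: for $k=m$ the binomial factor reduces to $e^{-k\mu s}$, and for $k=m=0$ it gives $l_0^0(\tau)=\tau$, matching the value the paper assigns by hand. What each approach buys: the paper's route reuses the explicit densities $f_{Y_j^k}$ and the auxiliary Proposition~\ref{propApSumpkfYk}, which it has developed anyway; yours bypasses both, treats all values of $m$ and $k$ uniformly, and is shorter and more conceptual, at the cost of having to justify the exchangeability/memorylessness step that makes the survivor count binomial (which does hold in this model, since residual lifetimes are i.i.d.\ $\mathrm{Exp}(\mu)$).
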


\begin{proof}
First consider $m>0$. In this case it makes sense to consider the duration of the lapse between the $k-m$ and the $k-m+1$ deaths in a population of $k$ individuals, which is the random variable
$$
X_{k-m+1}^k := Y_{k-m+1}^k - Y_{k-m}^k.
$$
Notice that $X_{k-m+1}^k$ is independent of $Y_{k-m}^k$. Moreover, since $\mu$ is constant, the probability density function of $X_{k-m+1}^k$ is
\begin{equation}\label{exp}
f_{X_{k-m+1}^k}(x)=m\mu e^{-m\mu x}
\end{equation}
because when the $k-m$ death takes place there remain $m$ individuals in the population. Observe that $f_{X_{k-m+1}^k}(x)$ does not depend on index $k$. Taking this into account, the conditional expectations in the definition of $l_m^k(\tau)$ can be written as:
$$
E\left(Y_{k-m+1}^k-Y_{k-m}^k | Y_{k-m+1}^k < \tau\right)= E\left(X_{k-m+1}^k | Y_{k-m}^k + X_{k-m+1}^k < \tau\right)
$$
and
\begin{small}
$$
E\left(\tau-Y_{k-m}^k | Y_{k-m}^k < \tau, Y_{k-m+1}^k > \tau\right)= E\left(\tau-Y_{k-m}^k | Y_{k-m}^k < \tau, Y_{k-m}^k + X_{k-m+1}^k > \tau\right),
$$
\end{small}
\noindent so that
\[
\begin{aligned}
 E\left(Y_{k-m+1}^k-Y_{k-m}^k | Y_{k-m+1}^k < \tau\right)& P\left(Y_{k-m+1}^k < \tau\right)\\
\qquad \qquad & =\int_0^\tau\int_0^{\tau-y} x f_{X_{k-m+1}^k}(x)f_{Y_{k-m}^k}(y) dx dy
\end{aligned}
\]
and
\begin{equation*}
\begin{aligned}
E\left(\tau-Y_{k-m}^k | Y_{k-m}^k < \tau, Y_{k-m+1}^k > \tau\right)& P\left(Y_{k-m}^k < \tau, Y_{k-m+1}^k > \tau\right) \\ 
& = \int_0^\tau\int_{\tau-y}^\infty (\tau-y) f_{X_{k-m+1}^k}(x)f_{Y_{k-m}^k}(y) dx dy.
\end{aligned}
\end{equation*}
By computing the inner integrals with respect to $x$ using \eqref{lapse} and \eqref{exp}, one obtains
$$
l_m^k(\tau)=\frac{1}{m\mu}\int_0^\tau\left(1-e^{-m\mu(\tau-y)}\right)f_{Y_{k-m}^k}(y)dy.
$$
The case $k=m$ is special because $l_k^k(\tau)$ depends on $Y_0^k\equiv 0$ (since $k\geq m>0$), which implies
$$
l_m^m(\tau)=\frac{1}{m\mu}\left(1-e^{-m\mu \tau}\right).
$$
Then, the sum in (\ref{apSumPkL}) becomes
$$
p_m l_m^m(\tau)+\frac{1}{m\mu}\int_0^\tau\left(1-e^{-m\mu(\tau-y)}\right)\sum_{k=m+1}^\infty p_k f_{Y_{k-m}^k}(y)dy,
$$
and using Proposition \ref{propApSumpkfYk} to replace the sum in the integral, the whole expression can be rearranged, after some calculations (using a change of variables $\frac{\lambda}{\mu}e^{-\mu y}=z$), so that it gives
\begin{equation}\label{formulaGammas}
\frac{1}{\mu}\frac{1}{m!}
	\left(
	\Gamma\left(m,\frac{\lambda}{\mu}e^{-\mu\tau}\right)
	-
	\Gamma\left(m,\frac{\lambda}{\mu}\right)
	\right).
\end{equation}
When $m=0$, the terms $l_0^k(\tau)$ in \eqref{lapse} satisfy
$$
l_0^k(\tau)=E\left(\tau-Y_k^k | Y_k^k<\tau\right)P\left(Y_k^k < \tau\right)=\int_0^\tau (\tau-y)f_{Y_k^k}(y)dy,
$$
with the particular case $k=0$ given by $l_0^0(\tau)=\tau$. Then, proceeding as before, the left hand side in (\ref{apSumPkL}) becomes
$$
p_0 l_0^0(\tau)+\int_0^\tau(\tau-y)\sum_{k=1}^\infty p_k f_{Y_k^k}(y)dy,
$$
and using the formulas for $p_k$ and Proposition \ref{propApSumpkfYk}, the above expression can be written as (\ref{formulaGammas}) with $m=0$.

\end{proof}

\section{Explicit computation of $\bar{B}$ with $g(z)=g_0/(1+z)$ and $A=1$}

When the individual growth rate is $g(z)=g_0/(1+z)$ and $A=1$, we have that $g_m = \frac{g_0}{1+m}$ and \eqref{eqSizeAtAge1} takes a more closed form (using $\lambda$ instead of $\bar{B}$)
$$
\begin{aligned}
\bar{s}(\tau) & =  \frac{g_0}{\mu} \int_{\frac{\lambda}{\mu} e^{-\mu \tau}}^{\frac{\lambda}{\mu}} \sum_{m=0}^\infty \frac{t^{m-1}}{(m+1)!} e^{-t} dt \\
& = \frac{g_0}{\mu} \int_{\frac{\lambda}{\mu} e^{-\mu \tau}}^{\frac{\lambda}{\mu}} \frac{1-e^{-t}}{t^2} dt = \frac{g_0}{\mu} \left( F \left(\frac{\lambda}{\mu}e^{-\mu \tau}\right) - F\left(\frac{\lambda}{\mu} \right) \right),
\end{aligned}
$$
where $F(t) := \frac{1-e^{-t}}{t} + \Gamma(0,t),$ is a primitive of the function $ -\frac{1-e^{-t}}{t^2}$.\\
We are now able to obtain a transcendental equation for $\bar{B}.$ Indeed, the right hand side of \eqref{BEqSth1} yields
$$
\begin{aligned}
\beta_0 \int_0^\infty \bar{s}(\tau;\lambda)e^{-\mu \tau} d\tau & = \frac{\beta_0 g_0}{\mu} \int_0^{\infty} \int_{\frac{\lambda}{\mu} e^{-\mu \tau}}^{\frac{\lambda}{\mu}} \frac{1-e^{-t}}{t^2} dt e^{-\mu \tau} d \tau \\
& =  \frac{\beta_0 g_0}{\mu \lambda} \Big( \gamma + \ln(\lambda/\mu) + \Gamma(0, \lambda/\mu) \Big),
\end{aligned}
$$
where we performed a change in the integration order and where $\gamma$ is the Euler's constant.\\
Since $R_0 = \frac{\beta_0g_0}{\mu^2}$ if $\beta(s)=\beta_0 s$ (see (\ref{R0eq})), assuming this value is larger than $1$, we can set the ``stochastic" analogous of (5.16) in \cite{BCDF2023} as $\bar{B}$ being the unique solution of the equation
\begin{equation}\label{tildeBeq}
\frac{\gamma + \ln \big( \frac{\bar{B}}{\mu}\big) + \Gamma \big(0,\frac{\bar{B}}{\mu} \big) }{\frac{\bar{B}}{\mu}} = \frac{1}{R_0}.
\end{equation}
(Notice that the function $\frac{\gamma + \ln(x) + \Gamma(0,x)}{x} $ for $x>0$ is strictly decreasing with limit $1$ at $0$ and limit $0$ at infinity.)\\
\\
On the other hand, the derivative of $\bar{s}(\tau)$ reduces to 
\begin{equation}\label{sizederivative}
\bar{s}'(\tau) = -\frac{g_0}{\mu} \frac{ 1-\exp\big(-\frac{\lambda}{\mu} e^{-\mu \tau} \big)}{\big(\frac{\lambda}{\mu}\big)^2 e^{-2 \mu \tau}}\big( -\lambda e^{-\mu\tau} \big) = g_0 \frac{ 1-\exp\big(-\frac{\lambda}{\mu} e^{-\mu \tau} \big)}{\frac{\lambda}{\mu} e^{-\mu \tau}}.
\end{equation}
We now can write the inverse function of $\bar{s}(\tau)$ and also $e^{-\mu \bar{s}^{-1}(x)}$, in terms of the inverse function of $F$ as
\begin{equation}\label{inverse}
e^{-\mu \bar{s}^{-1}(x)} = \frac{\mu}{\lambda} F^{-1}\left(\frac{\mu}{g_0} x +F\left(\frac{\lambda}{\mu}\right)\right)\,.
\end{equation}
Plugging \eqref{inverse} and \eqref{sizederivative} in \eqref{sizeDistSth} and replacing $\lambda$ with $\bar{B}$ we end up with a rather implicit expression for the size distribution
\begin{equation}\label{sizedensitystoch}
\bar{U}(x) = \bar{B} \frac{e^{-\mu \bar{s}^{-1}(x)}}{\bar{s}'(\bar{s}^{-1}(x))} = \frac{\mu}{g_0 } \frac{\left( F^{-1}\left(\frac{\mu}{g_0} x +F\left(\frac{\bar{B}}{\mu}\right)\right) \right)^2}{1-\exp\left(-F^{-1}\left(\frac{\mu}{g_0} x +F\left(\frac{\tilde{B}}{\mu}\right)\right) \right)} \,.
\end{equation}

\section{Auxiliary propositions}\label{AppendixAuxProp}

\begin{proposition}\label{propRiemanLimit}
%

Let $G:\mathbb{R_+}\rightarrow \mathbb{R}$ be a bounded function which is Riemann integrable on $[0,1]$. Then
\[
\lim_{x\rightarrow\infty} \frac{1}{x} \sum_{m=0}^{\infty} G\left(\frac{m}{x}\right) \frac{\gamma(m+1,x)}{m!} = \int_0^1 G(y) dy,
\]
where $\gamma$ is the lower incomplete gamma function.

\end{proposition}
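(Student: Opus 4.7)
The plan is to reinterpret the sum probabilistically. If $N_x$ denotes a Poisson random variable with parameter $x$, then a standard identity (combining $\Gamma(m+1,x)/m! = e^{-x}\sum_{k=0}^{m}x^k/k!$ with $\gamma+\Gamma=m!$) gives
\[
\frac{\gamma(m+1,x)}{m!} = P(N_x \geq m+1).
\]
Since $G$ is bounded and $E(N_x)=x$, the double sum $\sum_m \sum_{j\geq m+1} |G(m/x)|\,P(N_x=j)/x$ is absolutely convergent, so Fubini justifies swapping the order of summation:
\[
\frac{1}{x}\sum_{m=0}^\infty G\!\left(\frac{m}{x}\right) \frac{\gamma(m+1,x)}{m!} \;=\; E\!\left(\frac{1}{x}\sum_{m=0}^{N_x-1} G\!\left(\frac{m}{x}\right)\right) \;=:\; E(T_x),
\]
with the convention that the empty sum is zero.

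The next step is to compare $T_x$ with the deterministic left-endpoint Riemann sum
\[
T_x^{(0)} := \frac{1}{x}\sum_{m=0}^{\lfloor x\rfloor - 1} G\!\left(\frac{m}{x}\right),
\]
whose sample points lie in $[0,1)$ with spacing $1/x$. The Riemann integrability of $G$ on $[0,1]$ implies $T_x^{(0)} \to \int_0^1 G(y)\,dy$, modulo an $O(1/x)$ correction for the missing last subinterval $[\lfloor x\rfloor/x,\,1]$, which is controlled by the boundedness of $G$.

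It then remains to show that the random correction vanishes in $L^1$. The observation is that $T_x - T_x^{(0)}$ is a sum of at most $|N_x - \lfloor x\rfloor|$ terms, each bounded in absolute value by $\|G\|_\infty/x$, so
\[
|T_x - T_x^{(0)}| \leq \|G\|_\infty \cdot \frac{|N_x - \lfloor x\rfloor|}{x}.
\]
Using $\mathrm{Var}(N_x)=x$ together with $|N_x - \lfloor x\rfloor| \leq |N_x - x| + 1$, Cauchy–Schwarz yields $E(|T_x - T_x^{(0)}|) = O(1/\sqrt{x})$. Combined with the deterministic convergence of $T_x^{(0)}$, this gives $E(T_x) \to \int_0^1 G(y)\,dy$, as desired.

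The main subtlety, rather than difficulty, is that $G$ is only assumed Riemann integrable on $[0,1]$, whereas the expression evaluates $G$ at points $m/x > 1$ whenever $N_x > x$. The proof is organised so that these out-of-range evaluations are packaged entirely into the Poisson fluctuation term $|N_x - \lfloor x\rfloor|/x$, where only the boundedness of $G$ is required. The bound $|N_x - \lfloor x\rfloor|/x \to 0$ in $L^1$ is then nothing but the quantitative weak law of large numbers for $N_x/x$, stemming from $\mathrm{Var}(N_x)=x$.
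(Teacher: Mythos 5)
Your proof is correct, and it takes a genuinely different route from the paper's. The paper works first with integer $x=k$: it splits the sum at $m=k$ using $\gamma(m+1,k)+\Gamma(m+1,k)=m!$, evaluates the two error sums in closed form by telescoping, namely $\frac1k\sum_{m=0}^{k-1}\frac{\Gamma(m+1,k)}{m!}=e^{-k}\frac{k^{k-1}}{(k-1)!}$ and $\frac1k\sum_{m=k}^{\infty}\frac{\gamma(m+1,k)}{m!}=e^{-k}\frac{k^{k}}{k!}$, kills them with Stirling's formula, and then needs a separate three-term estimate ($I+II+III$) to interpolate from integer $k$ to real $x$. You instead recognise $\gamma(m+1,x)/m!=P(N_x\geq m+1)$ for $N_x\sim\mathrm{Poisson}(x)$, use Fubini (justified by $\sum_m P(N_x\geq m+1)=E(N_x)=x<\infty$ and boundedness of $G$) to rewrite the expression as $E\bigl(\frac1x\sum_{m=0}^{N_x-1}G(m/x)\bigr)$, and control the deviation from the deterministic Riemann sum by $\|G\|_\infty\,E|N_x-\lfloor x\rfloor|/x=O(x^{-1/2})$ via $\mathrm{Var}(N_x)=x$. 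The two arguments are quantitatively the same at heart — the paper's Stirling terms $e^{-k}k^k/k!\sim(2\pi k)^{-1/2}$ are exactly the Poisson concentration your Chebyshev bound captures — but your packaging buys three things: it treats real $x$ in one stroke with no integer/real interpolation; it yields an explicit $O(x^{-1/2})$ rate; and it uses only the stated hypotheses (bounded plus Riemann integrable on $[0,1]$), whereas the paper's sandwich bound on its term $III$ tacitly relies on $G$ being decreasing, which holds in the application but is not part of the proposition's hypotheses. The only point worth spelling out in a final write-up is the line "$T_x^{(0)}\to\int_0^1G$": the tags $m/x$, $m=0,\dots,\lfloor x\rfloor-1$, only cover $[0,\lfloor x\rfloor/x]$, so you should note explicitly (as you do in passing) that the missing piece contributes at most $\|G\|_\infty/x$.
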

\begin{proof}

Notice that for $k$ a natural number we have,
\begin{footnotesize}
		\[
		\begin{aligned}
		\frac{1}{k} \sum_{m=0}^{\infty} G\left(\frac{m}{k}\right) & \frac{\gamma(m+1,k)}{m!}  \\
		 & = \frac{1}{k} \sum_{m=0}^{k-1} G\left(\frac{m}{k}\right) - \frac{1}{k} \sum_{m=0}^{k-1} G\left(\frac{m}{k}\right) \left(1- \frac{\gamma(m+1,k)}{m!} \right) + \frac{1}{k} \sum_{m=k}^{\infty} G\left(\frac{m}{k}\right) \frac{\gamma(m+1, k)}{m!}\\
		& = \frac{1}{k} \sum_{m=0}^{k-1} G\left(\frac{m}{k}\right) - \frac{1}{k} \sum_{m=0}^{k-1} G\left(\frac{m}{k}\right) \frac{\Gamma(m+1,k)}{m!}  + \frac{1}{k} \sum_{m=k}^{\infty} G\left(\frac{m}{k}\right) \frac{\gamma(m+1,k)}{m!},
		\end{aligned}
		\]
\end{footnotesize}

\noindent where $\Gamma$ denotes the upper incomplete gamma function. Now, since
	\[
	\lim_{k\rightarrow\infty} \frac{1}{k} \sum_{m=0}^{k-1} G\left(\frac{m}{k}\right) = \int_0^1 G(y)dy,
	\]
	to conclude the proof in this case it is enough to check that
	\begin{equation}\label{limitTails0}
	\lim_{k\rightarrow\infty} \frac{1}{k} \sum_{m=0}^{k-1} G\left(\frac{m}{k}\right) \frac{\Gamma(m+1,k)}{m!}=0 \quad\text{and} \quad \lim_{k\rightarrow\infty} \frac{1}{k} \sum_{m=k}^{\infty} G\left(\frac{m}{k}\right) \frac{\gamma(m+1,k)}{m!}=0.
	\end{equation}
	Indeed, this follows from the fact that
	\[
	\begin{aligned}
	\frac{1}{k}
	\sum_{m=0}^{k-1} \frac{\Gamma(m+1,k)}{m!}
	&=\frac{1}{k}\sum_{m=0}^{k-1} e^{-k}\sum_{j=0}^m \frac{k^j}{j!} 
	= \frac{e^{-k}}{k}\sum_{j=0}^{k-1}\sum_{m=j}^{k-1} \frac{k^j}{j!}= \frac{e^{-k}}{k}\sum_{j=0}^{k-1} \frac{k^j}{j!}(k-j)\\
	& = e^{-k}\left(\sum_{j=0}^{k-1}\frac{k^j}{j!}-\sum_{j=1}^{k-1}\frac{k^{j-1}}{(j-1)!}\right) = e^{-k} \frac{k^{k-1}}{(k-1)!}.
	\end{aligned}
	\]
	and
	\[
	\begin{aligned}
	\frac{1}{k}
	\sum_{m=k}^{\infty} \frac{\gamma(m+1,k)}{m!}
	&=\frac{1}{k}\sum_{m=k}^{\infty} e^{-k}\sum_{j=m+1}^\infty \frac{k^j}{j!} 
	= \frac{e^{-k}}{k}\sum_{j=k+1}^\infty \sum_{m=k}^{j-1} \frac{k^j}{j!} \\
	& = \frac{e^{-k}}{k}\sum_{j=k+1}^\infty \frac{k^j}{j!}(j-k) = e^{-k}\left(\sum_{j=k}^{\infty}\frac{k^j}{j!}-\sum_{j=k+1}^{\infty}\frac{k^j}{j!}\right) = e^{-k} \frac{k^k}{k!},
	\end{aligned}
	\]
	so that \eqref{limitTails0} for natural $x$ follows using Stirling's formula. \\
	In order to extend the result to real $x$ we can write
$$
\begin{aligned}
\frac{1}{x} \sum_{m=0}^{\infty} & G\left(\frac{m}{x}\right) \frac{\gamma(m+1,x)}{m!} = \frac{1}{[x]}\sum_{m=0}^{\infty} G\left(\frac{m}{[x]}\right) \frac{\gamma(m+1,[x])}{m!} \\
&  + \frac{1}{x} \sum_{m=0}^{\infty} G\left(\frac{m}{x}\right) \frac{\gamma(m+1,x)}{m!} -  \frac{1}{[x]}\sum_{m=0}^{\infty} G\left(\frac{m}{[x]}\right) \frac{\gamma(m+1,[x])}{m!}
\end{aligned}
$$
and
$$
\frac{1}{x} \sum_{m=0}^{\infty} G\left(\frac{m}{x}\right) \frac{\gamma(m+1,x)}{m!} -  \frac{1}{[x]}\sum_{m=0}^{\infty} G\left(\frac{m}{[x]}\right) \frac{\gamma(m+1,[x])}{m!}
$$
$$
= \frac{1}{x}  \sum_{m=0}^{\infty} G\left(\frac{m}{x}\right) \frac{\gamma(m+1,x) - \gamma(m+1,[x])}{m!} + \left(\frac{1}{x}- \frac{1}{[x]} \right) \sum_{m=0}^{\infty} G\left(\frac{m}{x}\right) \frac{\gamma(m+1,[x])}{m!}
$$
$$
+ \frac{1}{[x]}\sum_{m=0}^{\infty} \left(  G\left(\frac{m}{x}\right) - G\left(\frac{m}{[x]}\right) \right) \frac{\gamma(m+1,[x])}{m!} =: I + II + III.
$$
Then we have, for $x>1$,
\begin{small}
\[
\begin{aligned}
0 & \leq I \leq \frac{g_0}{x} \sum_{m=0}^{\infty} \frac{1}{m!} \int_{x-1}^{x} t^{m} e^{-t} dt = \frac{g_0}{x} \int_{x-1}^{x} e^{-t} \sum_{m=0}^{\infty} \frac{1}{m!}  t^{m}  dt = \frac{g_0}{x},\\
|II| & = \frac{x - [x]}{x} \frac{1}{[x]}  \sum_{m=0}^{\infty} \left|G\left(\frac{m}{x}\right)\right| \frac{\gamma(m+1,[x])}{m!} \leq \frac{x - [x]}{x}   \max_{y\in\mathbb{R}_+} |G(y)| \frac{1}{[x]}  \sum_{m=0}^{\infty} \frac{\gamma(m+1,[x])}{m!},\\
0 & \leq III \leq \frac{1}{[x]}\sum_{m=0}^{\infty} G\left(\frac{m}{[x+1]}\right) \frac{\gamma(m+1,[x])}{m!} - \frac{1}{[x]} \sum_{m=0}^{\infty}    G\left(\frac{m}{[x]}\right) \frac{\gamma(m+1,[x])}{m!} 
\end{aligned}
\]
\end{small}

\noindent and the three terms tend to 0 when $x$ goes to $\infty$. This is obvious for the term $I$. For the term $II$ use that $\lim_{x \rightarrow \infty} \frac{x-[x]}{x} = 0$ and that
\[
\lim_{k\rightarrow \infty} \frac{1}{k} \sum_{m=0}^{\infty} \frac{\gamma(m+1,k)}{m!}=1
\]
for $k$ a natural number (which follows from the first part of the proof). Finally for the term $III$ the first part of the proof can be extended to the case when the argument of $G$ is $\frac{m}{k+1}$ instead of $\frac{m}{k}$ to show that
\[
\lim_{k\rightarrow\infty} \frac{1}{k}\sum_{m=0}^{\infty}  G\left(\frac{m}{k+1}\right) \frac{\gamma(m+1,k)}{m!}=\int_0^1 G(y)dy,
\]
since notice that
\[
\begin{aligned}
\lim_{k\rightarrow\infty} \frac{1}{k}\sum_{m=0}^{k-1}  G\left(\frac{m}{k+1}\right) & = \lim_{k\rightarrow\infty} \frac{k+1}{k}\frac{1}{k+1}\sum_{m=0}^{k}  G\left(\frac{m}{k+1}\right) - \frac{k+1}{k}\frac{1}{k+1}  G\left(\frac{k}{k+1}\right) \\
& = \lim_{k\rightarrow\infty} \frac{1}{k}\sum_{m=0}^{k-1} G\left(\frac{m}{k}\right).
\end{aligned}
\]
\end{proof}

\end{appendices}

\end{document}